\theoremstyle{plain}
\newtheorem{theorem}{Theorem}
\newtheorem{lemma}[theorem]{Lemma}
\newtheorem{prop}[theorem]{Proposition}
\newtheorem{corollary}[theorem]{Corollary}
\theoremstyle{definition}
\theoremstyle{remark}
\numberwithin{equation}{section}
\numberwithin{theorem}{section}
\newcommand{\br}{\overline}
\newcommand{\R}{\mathbb R}
\newcommand{\C}{\mathbb C}
\newcommand{\N}{\mathbb N}
\newcommand{\h}{\mathscr H}
\DeclareMathOperator{\dist}{dist}
\DeclareMathOperator{\diam}{diam}
\DeclareMathOperator{\id}{{\mathrm{id}}}
\DeclareMathOperator{\md}{Mod}
\DeclareMathOperator{\area}{\mathrm{Area}}
\DeclareMathOperator{\loc}{loc}
\renewcommand{\mod}{\mathrm{Mod\,}}
\begin{document}
\title{Metric surfaces and conformally removable sets in the plane}
\author{Dimitrios Ntalampekos}
\address{Department of Mathematics, Aristotle University of Thessaloniki, Thessaloniki, 54152, Greece.}
\thanks{The author was partially supported by NSF Grant DMS-2246485.}
\email{dntalam@math.auth.gr}
\date{\today}
\keywords{Removable, metric surfaces, quasiconformal, Hausdorff measure}
\subjclass[2020]{Primary 30L10; Secondary 30C35, 30C62}

\begin{abstract}
We characterize conformally removable sets in the plane with the aid of the recent developments in the theory of metric surfaces. We prove that a compact set in the plane is $\mathit S$-removable if and only if there exists a quasiconformal map from the plane onto a metric surface that maps the given set to a set of linear measure zero. The statement fails if we consider maps into the plane rather than metric surfaces. Moreover, we prove that a set is $S$-removable (resp.\ $CH$-removable) if and only if every homeomorphism from the plane onto a metric surface (resp.\ reciprocal metric surface) that is quasiconformal in the complement of the given set is quasiconformal everywhere. 
\end{abstract}
\maketitle

\section{Introduction}

Let $E\subset \C$ be a compact set.  We say that $E$ is \textit{$S$-removable} if every conformal embedding $f\colon \C\setminus E\to \C$ is the restriction of a conformal map of the Riemann sphere $\widehat \C$. Ahlfors--Beurling \cite{AhlforsBeurling:Nullsets} studied carefully $S$-removable sets in the plane and provided deep characterizations for them. If $\h^1(E)=0$, then $E$ is $S$-removable, as a consequence of Painlev\'e's theorem; see Theorem 2.7 and Proposition 4.3 in \cite{Younsi:removablesurvey}. Moreover, by the quasiconformal invariance of $S$-removable sets \cite{Younsi:removablesurvey}*{Proposition 4.7}, one immediately derives the following sufficient condition.

\begin{prop}\label{proposition:qc_hausdorff}
Let $E\subset \C$ be a compact set. If there exists a quasiconformal homeomorphism $f\colon \C \to \C$ such that $\h^1(f(E))=0$, then $E$ is $S$-removable. 
\end{prop}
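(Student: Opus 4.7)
The plan is essentially a two-line argument that invokes, in sequence, the two ingredients already cited in the paragraph preceding the statement.

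First, given the quasiconformal homeomorphism $f\colon \C\to\C$ with $\h^1(f(E))=0$, I would apply Painlev\'e's theorem (in the form of Theorem~2.7 and Proposition~4.3 of \cite{Younsi:removablesurvey}, as noted in the text) to the compact set $f(E)\subset\C$. This immediately yields that $f(E)$ is $S$-removable.

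Next, I would transport this removability back to $E$ using the quasiconformal invariance of the class of $S$-removable sets (\cite{Younsi:removablesurvey}*{Proposition 4.7}). Since $f^{-1}$ is itself a quasiconformal homeomorphism of $\C$ and $E=f^{-1}(f(E))$, this invariance principle gives that $E$ is $S$-removable.

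There is no real obstacle here: the proposition is a direct corollary of the two facts that have just been quoted, and the only thing to verify is that the hypotheses of the invariance statement from \cite{Younsi:removablesurvey} are exactly met by $f^{-1}$ (namely that $f^{-1}$ is a quasiconformal self-homeomorphism of $\C$, which is automatic). The content of the proposition is really just to package these two ingredients in a form that will be contrasted later with the metric-surface version announced in the abstract.
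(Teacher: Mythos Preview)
Your proposal is correct and is exactly the argument the paper has in mind: the preceding sentences already state that $\h^1=0$ implies $S$-removability via Painlev\'e, and that the class of $S$-removable sets is quasiconformally invariant, so the proposition is recorded as an immediate consequence without a separate formal proof. Your write-up simply spells out this two-step deduction.
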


Can this condition provide a characterization of $S$-removable sets? The answer is negative, as the following simple example illustrates. Consider a totally disconnected compact set $E\subset \R$ with Hausdorff dimension $1$ and $1$-dimensional Lebesgue measure zero. Then $E\times E$ has Hausdorff dimension $2$ and is $S$-removable because its projections to the coordinate axes have  $1$-dimensional measure zero \cite{AhlforsBeurling:Nullsets}*{Theorem 10}. On the other hand, sets of Hausdorff dimension $2$ cannot be mapped under quasiconformal maps of the plane to sets of lower dimension \cite{GehringVaisala:dimension}*{Corollary 13}. In particular, $E\times E$ cannot be mapped to a set of Hausdorff $1$-measure zero with a quasiconformal map of $\C$.

We show that the condition of Proposition \ref{proposition:qc_hausdorff} provides a characterization of $S$-removable sets if one \textit{alters quasiconformally the metric} of $\C$. A metric surface $X$ is a metric space that is homeomorphic to $\C$ and has locally finite Hausdorff $2$-measure.

\begin{theorem}\label{theorem:s_0}
A compact set $E\subset \C$ is $S$-removable if and only if there exists a quasiconformal homeomorphism $f$ from $\C$ onto a metric surface $X$ such that $\h^1(f(E))=0$.
\end{theorem}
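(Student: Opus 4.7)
The two implications call for quite different techniques: the sufficiency is an extension-and-Beltrami argument, while the necessity is a construction of a weighted metric surface.

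\emph{Sufficiency ($\Leftarrow$).} Let $f\colon\C\to X$ be quasiconformal with $\h^1(f(E))=0$, and take an arbitrary conformal embedding $g\colon\C\setminus E\to\C$. I would form the composition $h := g\circ f^{-1}\colon X\setminus f(E)\to\C$, which is quasiconformal as a composition of the conformal map $g$ with $f^{-1}$. The next step is to invoke a removability result inside the metric surface $X$: since $\h^1(f(E))=0$, the compact set $f(E)$ is expected to be qc-removable for maps into $\widehat\C$ (the metric-surface analogue of the classical Euclidean fact that $\h^1$-null sets are removable for quasiconformal maps), so $h$ extends to a qc homeomorphism $\tilde h\colon X\to\widehat\C$. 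Pulling back, $G := \tilde h\circ f\colon\widehat\C\to\widehat\C$ is quasiconformal and restricts to $g$ on $\C\setminus E$. To upgrade $G$ from qc to M\"obius, I note that $\h^1(f(E))=0$ forces $\h^2(f(E))=0$ by the standard comparison of Hausdorff measures, and the Lusin $N^{-1}$-property of qc maps of metric surfaces yields $|E|=0$ in the plane. The Beltrami coefficient of $G$ vanishes on $\C\setminus E$ by conformality of $g$ and therefore almost everywhere, so $G$ is conformal on $\widehat\C$, i.e., a M\"obius transformation; hence $g$ extends conformally and $E$ is $S$-removable.

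\emph{Necessity ($\Rightarrow$).} Given that $E$ is $S$-removable, I would try to realize $X$ as a weighted plane $(\C, d_\rho)$, with $f$ the identity. The Borel weight $\rho\colon\C\to[0,\infty]$ must simultaneously satisfy:
\begin{enumerate}[(i)]
\item $\rho\in L^2_{\loc}(\C)$, so that $(\C,d_\rho)$ has locally finite $\h^2$ and is a bona fide metric surface;
\item the identity is quasiconformal, i.e., $\rho$ causes only bounded distortion of the $2$-modulus of curve families;
\item $\h^1_\rho(E) = 0$.
\end{enumerate}
The input for constructing such a $\rho$ is the Ahlfors--Beurling characterization of $S$-removable sets as $\NED$ sets (null for extremal distances): $E$ being $\NED$ yields, via the modulus characterization, a sequence of admissible weights vanishing appropriately on $E$ with controlled $L^2$-norms and realizing the same extremal distances in $\C\setminus E$ as in $\C$. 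I would assemble these into a single Borel weight $\rho$ via a rapidly convergent $L^2$-series taken over a countable exhaustion of continuum pairs, and then blend with the Euclidean weight to secure (ii).

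\emph{Main obstacle.} The technical heart is the necessity direction, where the three conditions on $\rho$ pull against each other: making $\rho$ small enough on $E$ to annihilate its linear measure, while keeping the identity quasiconformal (a bounded distortion of moduli) and keeping $\rho$ in $L^2_{\loc}$, requires careful balancing of the $\NED$-extremal weights across scales. A subsidiary but nontrivial ingredient in the sufficiency direction is the removability of $\h^1$-null sets for qc maps of metric surfaces, which I expect to adapt from the classical Euclidean argument using modulus of path families, but which must be verified in the present setting.
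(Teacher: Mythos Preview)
Your plan has the right architecture but contains a genuine gap in each direction.

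\medskip

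\textbf{Sufficiency.} You compose in the wrong order. Forming $h=g\circ f^{-1}\colon X\setminus f(E)\to\C$ places the null set $f(E)$ in the \emph{domain}, and the removability statement you then invoke --- ``$\h^1$-null sets in a metric surface are removable for quasiconformal maps into $\widehat\C$'' --- is not available as a black box and is in fact essentially equivalent to what you are trying to prove (compose with $f$ and you are asking whether every quasiconformal embedding of $\C\setminus E$ into $\widehat\C$ extends, i.e.\ whether $E$ is $S$-removable). Even the continuous extension of $h$ across $f(E)$ is not automatic: you would need modulus estimates in $X\setminus f(E)$, which already requires knowing that $f(E)$ is negligible for extremal length in $X$. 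The paper avoids all of this by reversing the composition: it considers $f\circ g^{-1}$, which maps \emph{into} $X$ and thus places the $\h^1$-null set $f(E)$ in the \emph{target}. There one has a clean removability lemma for \emph{weakly} quasiconformal maps (a cell-like extension across a target set of $\h^1$-measure zero remains weakly quasiconformal), and then the fact that weakly quasiconformal maps into $\C$ are automatically quasiconformal homeomorphisms finishes the job upon composing with $f^{-1}$ and inverting.

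\medskip

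\textbf{Necessity.} Your scheme does not address the hard half of condition (ii). Getting $\rho\in L^2_{\loc}$ and $\h^1_{d_\rho}(E)=0$ is not the obstacle; the obstacle is the \emph{lower} modulus inequality for $\id\colon\C\to(\C,d_\rho)$, equivalently the reciprocality of the weighted surface. A convergent $L^2$-series of $\NED$ test functions does not by itself give this --- it controls moduli from above, not from below. The paper proceeds quite differently: it takes the explicit weight $\omega(x)=\min\{\dist(x,E)^{1/\dist(x,E)},1\}$, which forces $\dim_{\h}(E)=0$ in $(\C,d_\omega)$ (a direct computation, using only that $E$ is totally disconnected), and then invokes the Ikonen--Romney theorem: $E$ is $S$-removable \emph{precisely} when every such $(\C,d_\omega)$ is reciprocal. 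Rajala's uniformization then yields a quasiconformal map $\phi\colon(\C,d_\omega)\to\C$, and the composition $\phi\circ\id$ is a planar embedding quasiconformal off $E$; the $S$-removability of $E$ (now in its classical quasiconformal form) is used a \emph{second} time to conclude that $\phi\circ\id$, hence $\id$, is quasiconformal on all of $\C$. Your outline does not identify this reciprocality step, and without it the quasiconformality of the identity cannot be established.
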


We also include another characterization involving metric surfaces.

\begin{theorem}\label{theorem:s_def}
A compact set $E\subset \C$ is $S$-removable if and only if every homeomorphism $f$ from $\C$ onto a metric surface $X$ that is quasiconformal on $\C\setminus E$ is quasiconformal on $\C$.
\end{theorem}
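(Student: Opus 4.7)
\emph{Forward direction.} Assume $E$ is $S$-removable and let $f\colon \C \to X$ be a homeomorphism onto a metric surface that is quasiconformal on $\C\setminus E$. By Theorem~\ref{theorem:s_0}, there exists a quasiconformal homeomorphism $h\colon \C \to Y$ onto some metric surface $Y$ with $\h^1(h(E))=0$. The composition $\phi := f \circ h^{-1}\colon Y \to X$ is then a homeomorphism between metric surfaces that is quasiconformal off the set $h(E)\subset Y$, a set of vanishing $1$-dimensional Hausdorff measure. The key analytic input is a removability theorem: a homeomorphism between metric surfaces that is quasiconformal off an $\h^1$-null set is quasiconformal everywhere. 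This should be accessible through the modulus-based definition of quasiconformality, since curve families contained in an $\h^1$-null set carry zero modulus and therefore do not affect the defining modulus inequalities. Invoking this, $\phi$ is quasiconformal on $Y$, and hence $f = \phi \circ h$ is quasiconformal on $\C$.

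\emph{Reverse direction.} Argue the contrapositive: suppose $E$ is not $S$-removable and construct a homeomorphism $f\colon \C \to X$ onto a metric surface that is quasiconformal on $\C\setminus E$ but fails to be quasiconformal on $\C$. Choose a conformal embedding $g\colon \C\setminus E \to \C$ that is not the restriction of a conformal automorphism of $\hat\C$, and set $K := \C \setminus g(\C\setminus E)$. I would reshape $\C$ using $g$: equip $\C$ with a new metric $d$ such that $(\C,d)$ is a metric surface, the identity map $f := \id\colon (\C,|\cdot|) \to (\C,d)$ is a homeomorphism, and $d(x,y) = |g(x)-g(y)|$ for $x,y\in \C\setminus E$. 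Then $f|_{\C\setminus E}$ is a conformal isometry onto its image (which is isometric to $\C\setminus K$ via $g$), and in particular is quasiconformal. If $f$ were also quasiconformal on all of $\C$, then composing with a quasiconformal uniformization $(\C,d)\to \C$ provided by the metric-surface theory and straightening the resulting Beltrami coefficient via the measurable Riemann mapping theorem should yield a conformal extension of $g$ to an automorphism of $\hat\C$, contradicting the choice of $g$.

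\emph{Main obstacle.} The principal technical difficulty is the construction of the metric $d$. One must define $d$ on $E$ in such a way that $(\C,d)$ is a genuine metric surface (homeomorphic to $\C$ with locally finite $\h^2$-measure), the identity map remains a homeomorphism, and $d$ agrees with the pullback of the Euclidean metric under $g$ on $\C\setminus E$. When $g$ has nontrivial cluster sets at points of $E$, one cannot simply extend $d$ by continuity; instead, one likely proceeds via a length-metric completion or via identifications by cluster-set equivalence, while verifying the planar topology and the Hausdorff-measure bounds. The contradiction step in the reverse direction, extracting a conformal extension from a hypothetical quasiconformal one, is also delicate when $E$ has positive Lebesgue measure, and is likely handled using the uniformization theory of metric surfaces developed by Ntalampekos--Romney and used elsewhere in the paper.
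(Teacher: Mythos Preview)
Your forward direction is essentially the paper's argument. The paper packages everything into a single Theorem~\ref{theorem:ned_hausdorff} with five equivalent conditions; the implication you need is its \ref{nh:iii}$\Rightarrow$\ref{nh:v}, and the ``removability of $\h^1$-null sets'' you invoke is Lemma~\ref{lemma:wqc_extend} combined with Lemma~\ref{lemma:wqc_qc}. One correction: Lemma~\ref{lemma:wqc_extend} requires the null set to sit in the \emph{target}, and it only yields \emph{weak} quasiconformality. So you should apply it to $h\circ f^{-1}\colon X\to Y$ (not to $\phi=f\circ h^{-1}$), conclude that $h\circ f^{-1}$ is weakly quasiconformal, then compose with $h^{-1}$ to get a weakly quasiconformal map $f^{-1}\colon X\to\C$, and finally invoke Lemma~\ref{lemma:wqc_qc} to upgrade to genuine quasiconformality.

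Your reverse direction has a genuine gap, and the paper takes a completely different route. Your construction of the metric $d$ cannot work as stated when $E$ has a nondegenerate component: if, say, $E$ is an arc and $g$ maps $\C\setminus E$ onto the complement of a disk, then no metric $d$ on $\C$ can simultaneously make $\id$ a homeomorphism and satisfy $d(x,y)=|g(x)-g(y)|$ off $E$, because points on opposite sides of the arc would have to be both $d$-close (for continuity of $\id$) and $d$-far (since their $g$-images are far). Even when $E$ is totally disconnected your plan is problematic: if the complement $K=\C\setminus g(\C\setminus E)$ is also totally disconnected, then $g$ extends to a homeomorphism of $\C$, your surface $(\C,d)$ is isometric to $\C$, and your contradiction step would need $g$ to fail to be \emph{quasiconformal} on $\C$---but that is the $CH$-removability condition, strictly weaker than failure of $S$-removability.

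The paper avoids all of this. It first proves the nontrivial Theorem~\ref{theorem:totally_disconnected_ned}: every non--$S$-removable compact set contains a totally disconnected non--$S$-removable subset $F$. Then it invokes the Ikonen--Romney characterization (Theorem~\ref{theorem:ikonen_romney}): since $F$ is not $S$-removable, there is a continuous weight $\omega$ vanishing exactly on $F$ for which $(\C,d_\omega)$ is \emph{not reciprocal}. By Lemma~\ref{lemma:omega}, $\id\colon\C\to(\C,d_\omega)$ is a homeomorphism that is $1$-quasiconformal on $\C\setminus F\supset\C\setminus E$, yet it cannot be quasiconformal on $\C$ because the target is not reciprocal. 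The counterexample surface thus comes from a degenerate conformal weight, not from pulling back along a conformal embedding $g$.
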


Note that both statements are false if we do not allow $X$ to be an arbitrary metric surface, but only consider the case $X=\C$. Both theorems are consequences of Theorem \ref{theorem:ned_hausdorff}. These results expand on the main results of Ikonen--Romney \cite{IkonenRomney:removable} who studied the relation between $S$-removable sets and a specific class of surfaces arising by scaling the Euclidean metric with a conformal weight. 

In recent years metric surfaces have been studied in connection with the problems of quasisymmetric and quasiconformal uniformization. Most prominently, the Bonk--Kleiner theorem \cite{BonkKleiner:quasisphere} provides sufficient conditions for the existence of a quasisymmetric parametrization of a metric sphere and works due to Meier, Rajala, Romney, Wenger, and the current author \cites{Rajala:uniformization, NtalampekosRomney:length, MeierWenger:uniformization,  NtalampekosRomney:nonlength} provide a very satisfactory understanding of the quasiconformal parametrization problem. Not every surface admits a quasiconformal parametrization by the complex plane. Rajala \cite{Rajala:uniformization} introduced the class of \textit{reciprocal} surfaces and proved that these are precisely the metric surfaces that admit a quasiconformal parametrization by a planar domain. 

A class of sets that are related to $S$-removable sets are $CH$-removable sets. A compact set $E\subset \C$ is \textit{$CH$-removable} if every homeomorphism $f\colon \C\to \C$ that is conformal in $\C\setminus E$ is the restriction of a conformal map of $\C$. Equivalently, we may consider quasiconformal maps instead of conformal in this definition \cite{Younsi:removablesurvey}*{Proposition 5.7}. The class of $CH$-removable sets is significantly larger than the class of $S$-removable sets, but we do not have a satisfactory understanding as in the case of $S$-removable sets; see \cite{Ntalampekos:cned} for recent developments in this direction. The following statement is an immediate consequence of the definitions.

\begin{theorem}\label{theorem:ch_def}
A compact set $E\subset \C$ is $CH$-removable if and only if every homeomorphism $f$ from $\C$ onto a reciprocal metric surface $X$ that is quasiconformal on $\C\setminus E$ is quasiconformal on $\C$.
\end{theorem}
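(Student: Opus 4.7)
The plan is to derive this from the quasiconformal reformulation of $CH$-removability combined with Rajala's uniformization theorem for reciprocal metric surfaces. I would use throughout the equivalent definition from \cite{Younsi:removablesurvey}*{Proposition 5.7}, cited in the paragraph preceding the theorem: $E$ is $CH$-removable if and only if every homeomorphism $\C\to\C$ that is quasiconformal on $\C\setminus E$ is quasiconformal on all of $\C$.

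For the forward direction, suppose $E$ is $CH$-removable and let $f\colon\C\to X$ be a homeomorphism onto a reciprocal metric surface $X$ which is quasiconformal on $\C\setminus E$. Since $X$ is homeomorphic to $\C$ and reciprocal, Rajala's theorem supplies a quasiconformal homeomorphism $g\colon X\to\Omega$ onto a planar domain $\Omega$; the homeomorphism type of $X$ forces $\Omega=\C$. The composition $g\circ f\colon\C\to\C$ is then a homeomorphism which is quasiconformal on $\C\setminus E$, hence quasiconformal on all of $\C$ by $CH$-removability. Writing $f=g^{-1}\circ(g\circ f)$ exhibits $f$ as a composition of quasiconformal maps between metric surfaces, so $f$ is quasiconformal on $\C$.

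For the converse, I would simply observe that $\C$ itself (with the Euclidean metric) is a reciprocal metric surface, so the assumed condition applied to a homeomorphism $\C\to\C$ that is quasiconformal off $E$ shows that it is quasiconformal everywhere, giving $CH$-removability in its quasiconformal form.

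There is essentially no obstacle beyond the bookkeeping of composing quasiconformal maps between metric surfaces; the only point requiring a moment's thought is that Rajala's theorem, which in general produces a parametrization by a planar domain, must in this setting produce one by $\C$ itself, as forced by the topology of $X$. Hence the author's description of the statement as an ``immediate consequence of the definitions'' is accurate, once Rajala's uniformization is available to convert reciprocal metric surfaces into the plane.
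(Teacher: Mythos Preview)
Your approach is correct and is exactly the kind of unfolding the paper has in mind; note that the paper gives no explicit proof, simply calling the statement ``an immediate consequence of the definitions,'' so there is no detailed argument to compare against. The reduction via Rajala's uniformization and the quasiconformal form of $CH$-removability from \cite{Younsi:removablesurvey}*{Proposition 5.7} is the natural route.

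There is one small imprecision worth flagging. You write that ``the homeomorphism type of $X$ forces $\Omega=\C$.'' That is not quite right: both $\C$ and $\D$ are simply connected planar domains homeomorphic to $\C$, so topology alone does not rule out $\Omega$ being conformally a disk. What does rule it out is the quasiconformality of $g\circ f$ near infinity. Since $E$ is compact, choose $R>0$ with $E\subset B(0,R)$; then $g\circ f$ restricts to a quasiconformal homeomorphism from $\{|z|>R\}$, an annulus of infinite modulus, onto $\Omega\setminus (g\circ f)(\overline{B(0,R)})$. If $\Omega$ were conformally the disk, the latter would be a ring domain with two nondegenerate boundary continua and hence finite modulus, contradicting the quasi-invariance of modulus. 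Thus $\Omega$ is conformally $\C$, and after composing with a conformal map you may take $\Omega=\C$. With this adjustment your argument is complete.
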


Upon comparing Theorems \ref{theorem:s_def} and \ref{theorem:ch_def} we see that the difference between $S$-removable and $CH$-removable sets is closely related to the difference between arbitrary metric surfaces and reciprocal metric surfaces. 

One can ask whether there is a result analogous to Theorem \ref{theorem:s_0} for $CH$-re\-movable sets. We are able to provide a sufficiency criterion.

\begin{theorem}\label{theorem:intro_ch}
Let $E\subset \C$ be a compact set. Suppose that there exists a quasiconformal homeomorphism $f$ from $\C$ onto a metric surface $X$ such that $f(E)$ has $\sigma$-finite Hausdorff $1$-measure. Then $E$ is $CH$-removable. 
\end{theorem}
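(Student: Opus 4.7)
The plan is to invoke Theorem~\ref{theorem:ch_def}. Let $g\colon \C \to Y$ be any homeomorphism from the plane onto a reciprocal metric surface $Y$ that is quasiconformal on $\C\setminus E$; the task is to show that $g$ is quasiconformal on all of $\C$. I would form the composition $h := g\circ f^{-1}\colon X \to Y$. Since $f$ is a quasiconformal homeomorphism between metric surfaces, its inverse $f^{-1}\colon X \to \C$ is quasiconformal as well. Combined with the quasiconformality of $g$ on $\C\setminus E$, this makes $h$ a homeomorphism between metric surfaces that is quasiconformal on $X\setminus F$, where $F := f(E)$ has $\sigma$-finite $\h^1$-measure in $X$ by hypothesis. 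If one can show $h$ is quasiconformal on all of $X$, then $g = h\circ f$ is quasiconformal on $\C$, and Theorem~\ref{theorem:ch_def} yields that $E$ is $CH$-removable.

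The crux is thus the following removability lemma: a homeomorphism $h\colon X\to Y$ between metric surfaces, with $Y$ reciprocal, that is quasiconformal off a set $F\subset X$ of $\sigma$-finite $\h^1$-measure, is quasiconformal on $X$. I expect this to be the $\sigma$-finite analog of the statement underlying Theorem~\ref{theorem:ned_hausdorff} and to be proved by the $\CNED$-set strategy of \cite{Ntalampekos:cned}. Writing $F = \bigcup_n F_n$ with $\h^1(F_n)<\infty$, the key modulus-theoretic fact is that for every curve family $\Gamma$ in $X$, the subfamily of curves meeting $F$ in positive $\h^1$-length (and more generally those that fail to admit a good partition with respect to $F$) has modulus zero. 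In other words, $F$ should be shown to be $\CNED$ in $X$. Discarding these exceptional curves does not affect $\md\Gamma$, and one can then transfer the modulus inequality $\md h(\Gamma') \le K\md \Gamma'$ from curve families $\Gamma'$ avoiding $F$, by an approximation argument, to arbitrary $\Gamma$ in $X$.

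The principal obstacle is the transfer of the \emph{lower} modulus inequality across $F$, that is, establishing $\md \Gamma \le K\md h(\Gamma)$ for general $\Gamma$ in $X$ rather than only for curve families avoiding $F$. The upper inequality is the easier direction, since exceptional curves in $X$ form a null family and can be ignored. For the lower inequality the hypothesis that $Y$ is reciprocal becomes essential: it is precisely reciprocity that supplies the quantitative modulus machinery (admissible weights on $Y$ with well-controlled integrals along the images of curves) needed to pull modulus estimates back through $h^{-1}$ across $F$ and match the quasiconformal constant. Once both modulus inequalities are in hand, $h$ is quasiconformal on $X$, completing the proof via the reduction above.
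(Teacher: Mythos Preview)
Your proposal is an outline rather than a proof, and the step you yourself flag as the ``crux'' is a genuine gap. You need a removability lemma of the form: if $h\colon X\to Y$ is a homeomorphism between metric surfaces, $Y$ reciprocal, and $h$ is quasiconformal off a set $F\subset X$ of $\sigma$-finite $\h^1$-measure, then $h$ is quasiconformal on $X$. You do not prove this; you say you ``expect'' it to follow from a $\CNED$ strategy and then acknowledge that the lower modulus inequality is the ``principal obstacle.'' The $\CNED$ theory of \cite{Ntalampekos:cned} is developed for subsets of Euclidean space, where one has lines, Fubini, and ACL; none of that structure is available in a general metric surface $X$, and no substitute is offered. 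As written, the argument stops exactly where the difficulty begins.

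Two remarks on the setup make the paper's route visible. First, invoking Theorem~\ref{theorem:ch_def} is an unnecessary strengthening: to verify $CH$-removability it suffices, by definition, to test homeomorphisms $g\colon\C\to\C$. Second, and more importantly, the paper composes in the \emph{opposite} direction: with $g\colon\C\to\C$ it sets $h=f\circ g^{-1}\colon\C\to X$, so that the domain of $h$ is Euclidean and the exceptional set $g(E)$ sits in $\C$ with image $h(g(E))=f(E)$ of $\sigma$-finite $\h^1$ in $X$. One then switches to the analytic viewpoint via Williams' equivalence (Theorem~\ref{theorem:qc_definitions_williams}) and proves a Sobolev removability lemma (Lemma~\ref{lemma:newton_removable}): an embedding $h\colon\Omega\subset\R^n\to V$ with $h|_{\Omega\setminus E}\in N^{1,p}$ and $\h^1(h(E))$ $\sigma$-finite lies in $N^{1,p}(\Omega,V)$ with $\rho_h=0$ on $E$. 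The proof is a line-by-line ACL argument: along a.e.\ line $L$ one has $\h^1(h(E\cap L))=0$, and the upper-gradient inequality on segments in $\Omega\setminus E$ is patched across $E\cap L$ using subadditivity of Hausdorff content. This yields that $h$ is weakly quasiconformal on $\C$, hence $g^{-1}=f^{-1}\circ h\colon\C\to\C$ is weakly quasiconformal, and for planar maps weak quasiconformality implies quasiconformality (Lemma~\ref{lemma:wqc_qc}). Your modulus/CNED approach in $X$ bypasses exactly the Euclidean structure that makes this work.
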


The proof is given in Section \ref{section:ch}. It would be very interesting to know whether the converse to this statement is true, even for totally disconnected sets $E$.

Let $E\subset \C$. We define the \textit{geometric quasiconformal dimension} of $E$ to be the infimum of Hausdorff dimensions of sets  $f(E)$, over all quasiconformal homeomorphisms $f$ from $\C$ onto metric surfaces $X$; throughout the paper we use the so-called geometric definition of quasiconformality, whence the name geometric quasiconformal dimension. We prove the following results regarding the geometric quasiconformal dimension.

\begin{theorem}\label{theorem:s_gd0}
A compact set $E\subset \C$ is $S$-removable if and only if its geometric quasiconformal dimension is $0$. 
\end{theorem}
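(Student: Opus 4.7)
The backward direction is immediate from Theorem \ref{theorem:s_0}. If the geometric quasiconformal dimension of $E$ equals $0$, then given any $s \in (0,1)$ there is a quasiconformal homeomorphism $f\colon \C \to X$ onto a metric surface with $\dimh f(E) < s < 1$, so in particular $\h^1(f(E)) = 0$, and Theorem \ref{theorem:s_0} yields that $E$ is $S$-removable.

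For the forward direction, suppose $E$ is $S$-removable. The goal is to show that for every $\epsilon > 0$ there exists a quasiconformal homeomorphism $g \colon \C \to Y$ onto a metric surface $Y$ with $\dimh g(E) < \epsilon$. Theorem \ref{theorem:s_0} already supplies a quasiconformal homeomorphism $f \colon \C \to X$ with $\h^1(f(E)) = 0$, but this only gives $\dimh f(E) \leq 1$, not the arbitrarily small value required. The plan is therefore to post-compose $f$ with a further quasiconformal deformation $h \colon X \to Y$ onto another metric surface that compresses $F \coloneqq f(E)$ to dimension less than $\epsilon$. Since compositions of quasiconformal homeomorphisms between metric surfaces are quasiconformal, $g = h \circ f$ will then witness that the geometric quasiconformal dimension of $E$ is at most $\epsilon$; letting $\epsilon \to 0$ finishes the proof.

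To construct $h$, I would exploit the $\h^1$-nullity of $F$ in $X$. Since $F$ has $1$-measure zero, it admits, for each $n$, a cover by balls $\{B(x_i^{(n)}, r_i^{(n)})\}_i$ with $\sum_i r_i^{(n)} \to 0$. The idea is to replace the metric of $X$ with a weighted metric $d_\rho$ where the weight $\rho$ is comparable to $1$ away from $F$ and decays to zero along $F$ at a rate calibrated to the given $\epsilon$. With a judicious choice of $\rho$, the weighted diameters of the covering balls shrink fast enough that $\h^s(F, d_\rho) = 0$ for every $s \in (0, \epsilon)$, while the weight remains integrable so that $(X, d_\rho)$ is a metric surface. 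Rather than performing this by hand, the natural route is to invoke the underlying Theorem~\ref{theorem:ned_hausdorff}, which presumably characterizes $S$-removable (equivalently, $\NED$) sets in modulus-theoretic terms that are insensitive to the exponent $1$ and therefore allow the same construction to deliver $\h^s = 0$ for arbitrarily small $s > 0$.

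The main obstacle is verifying simultaneously that (i) the deformed space $(X, d_\rho)$ is a metric surface, i.e.\ has locally finite $\h^2$-measure, (ii) the identity map $h\colon X \to (X, d_\rho)$ is quasiconformal in the geometric sense employed throughout the paper, and (iii) the Hausdorff dimension of $F$ in the new metric drops below the prescribed $\epsilon$. Controlling (i) and (ii) together is delicate: making $\rho$ decay rapidly on $F$ strongly compresses $F$ but threatens local $\h^2$-finiteness and geometric quasiconformality. The expectation is that the $\NED$-type characterization behind Theorem~\ref{theorem:ned_hausdorff} provides exactly the modulus control needed to carry out such a deformation along any cover of $F$ of arbitrarily small total length, and thus to attain Hausdorff dimension zero in the limit.
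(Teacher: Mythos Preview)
Your backward direction is correct and matches the paper's reasoning.

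Your forward direction, however, is not a proof but a plan with acknowledged gaps, and the plan takes an unnecessary detour. You start from a quasiconformal $f\colon \C\to X$ with $\h^1(f(E))=0$ supplied by Theorem~\ref{theorem:s_0}, and then propose to post-compose with a weighted deformation $h\colon X\to (X,d_\rho)$ that further compresses $F=f(E)$. The concrete obstacle is that the machinery available in the paper for conformal weights (Lemmas~\ref{lemma:omega} and~\ref{lemma:dimension_zero}, Theorem~\ref{theorem:ikonen_romney}) is developed on $\C$, not on an arbitrary metric surface $X$; in particular, you have no analogue of Lemma~\ref{lemma:omega} ensuring that $(X,d_\rho)$ is a metric surface and that $\id$ is quasiconformal off $F$, nor an analogue of Theorem~\ref{theorem:ikonen_romney} to upgrade this to global quasiconformality. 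Your items (i)--(iii) are therefore genuine unresolved issues, not just bookkeeping.

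The paper avoids all of this by working on $\C$ from the outset. Condition~\ref{nh:ii} of Theorem~\ref{theorem:ned_hausdorff} (established via Lemma~\ref{lemma:dimension_zero} for the weight construction and Theorem~\ref{theorem:ikonen_romney} plus Rajala's theorem for quasiconformality) directly furnishes a quasiconformal homeomorphism $\id\colon \C\to (\C,d_\omega)$ with $\dim_\h(\id(E))=0$. Thus the geometric quasiconformal dimension is $0$ and is \emph{attained} by a single map; no $\varepsilon$-argument or post-composition is needed. If you wish to salvage your approach, simply drop the intermediate map $f$ and apply the weight construction of Lemma~\ref{lemma:dimension_zero} to $E\subset\C$ itself (noting that $S$-removability forces $E$ to be totally disconnected).
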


The result follows from Theorem \ref{theorem:ned_hausdorff} below. In fact, we show that the dimension is attained. Combining Theorem \ref{theorem:s_gd0} with Theorem \ref{theorem:s_0}, we obtain the next result. 

\begin{corollary}\label{corollary:gd_0_1}
There are no sets in the plane with geometric quasiconformal dimension strictly between $0$ and $1$.
\end{corollary}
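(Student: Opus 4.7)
The plan is to argue by contradiction, leveraging that Hausdorff dimension strictly less than $1$ forces the $1$-dimensional Hausdorff measure to vanish, and then invoking Theorems \ref{theorem:s_0} and \ref{theorem:s_gd0} in sequence.

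Suppose, toward a contradiction, that $E\subset\C$ has geometric quasiconformal dimension $d\in(0,1)$. By definition of the geometric quasiconformal dimension as an infimum, I can pick $\varepsilon>0$ with $d+\varepsilon<1$ and find a quasiconformal homeomorphism $f$ from $\C$ onto a metric surface $X$ with $\dimh f(E)<d+\varepsilon<1$. Since any set of Hausdorff dimension strictly less than $1$ has $\h^1$-measure zero, this gives $\h^1(f(E))=0$.

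Now Theorem \ref{theorem:s_0} applies directly: the existence of such an $f$ witnesses that $E$ is $S$-removable. Then Theorem \ref{theorem:s_gd0} tells us that an $S$-removable set has geometric quasiconformal dimension equal to $0$, contradicting our assumption $d>0$. Hence no compact $E$ can have geometric quasiconformal dimension in $(0,1)$.

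There is essentially no obstacle here — the corollary is a purely formal consequence of the two cited theorems, so the only thing to be careful about is making sure that the infimum in the definition of geometric quasiconformal dimension gives, for each $\varepsilon>0$, a single map $f$ whose image has dimension below $d+\varepsilon$, which then yields $\h^1(f(E))=0$ as soon as $d+\varepsilon<1$. (The author in fact notes that the infimum is attained, which would make the argument even cleaner, but attainment is not needed for this corollary.)
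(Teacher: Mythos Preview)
Your argument is correct and matches the paper's own justification, which is simply the one-line remark that the corollary follows by combining Theorem~\ref{theorem:s_0} with Theorem~\ref{theorem:s_gd0}; you have merely spelled out the logical chain explicitly. The only cosmetic point is that you should state at the outset that $E$ is compact (since both cited theorems require this), rather than only mentioning it in your final sentence.
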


A similar result is true for other notions of dimension such as the conformal dimension \cite{Kovalev:conformal_dimension} and the conformal Assouad dimension \cite{Heinonen:metric}*{Section 15.8}. It seems plausible that $CH$-removable sets have geometric quasiconformal dimension at most $1$, which would follow if the converse of Theorem \ref{theorem:intro_ch} were true. We remark that the Sierpi\'nski gasket is an example of a set that is not $CH$-removable \cite{Ntalampekos:gasket}, yet it has geometric quasiconformal dimension equal to $1$ \cite{TysonWu:Gasket}.

An ingredient in the proof of Theorem \ref{theorem:ned_hausdorff} is the next fundamental fact, which we have not been able to locate in the literature and is established in Section \ref{section:disconnected}.

\begin{theorem}\label{theorem:totally_disconnected_ned}
If a compact set $K\subset \C$ is not $S$-removable, then there exists a totally disconnected compact set $K'\subset K$ that is not $S$-removable.
\end{theorem}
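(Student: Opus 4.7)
By the Ahlfors--Beurling characterization of $S$-removable sets as $\NED$ sets (null sets for extremal distance), the hypothesis that $K$ is not $S$-removable supplies disjoint continua $F_1,F_2\subset\C\setminus K$ and $\delta>0$ with
\[
\md(\Gamma_\C(F_1,F_2))\geq\md(\Gamma_{\C\setminus K}(F_1,F_2))+\delta,
\]
where $\Gamma_\Omega(F_1,F_2)$ denotes the family of curves in $\Omega$ joining $F_1$ to $F_2$; write $M=\md(\Gamma_\C(F_1,F_2))$. The plan is to produce a totally disconnected compact $K'\subset K$ which still exhibits a strict modulus deficit against the same pair $F_1,F_2$. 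Two structural cases are easy. If $K$ contains a non-degenerate line segment, then a fat Cantor subset of that segment is totally disconnected, lies in $K$ on a line, and has positive $1$-Lebesgue measure, hence is not $S$-removable by the classical fact that compact subsets of a line with positive $1$-measure are non-$S$-removable. If $K$ contains no segment but $\h^2(K)>0$, then Fubini yields a line $L$ with $\h^1(L\cap K)>0$; the absence of segments forces $L\cap K$ to have empty interior in $L$, so it is totally disconnected, while remaining a positive-measure subset of a line.

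In the main case $\h^2(K)=0$, almost every horizontal and vertical line meets $K$ in an $\h^1$-null set. I would construct $K'=\bigcap_{n\geq1}K_n$ iteratively with $K_0=K$, removing at each stage a thin open neighborhood of a fine grid of ``good'' lines. At stage $n$, choose a finite axis-aligned grid $G_n$ of horizontal and vertical lines with spacing at most $1/n$ covering a bounded neighborhood of $K$ that is at positive distance from $F_1\cup F_2$, and with each line $L\in G_n$ satisfying $\h^1(L\cap K)=0$ (a dense choice, by Fubini). Two modulus facts then drive the step. First, since $L\cap K_{n-1}$ is an $\h^1$-null subset of the line $L$, a standard crossing estimate shows that the family of curves in $\Gamma_\C(F_1,F_2)$ meeting $L\cap K_{n-1}$ has $2$-modulus zero; subadditivity of $\sqrt{\md}$ over the finitely many lines of $G_n$ then gives that the family meeting $G_n\cap K_{n-1}$ has modulus zero, whence
\[
\md(\Gamma_{\C\setminus(K_{n-1}\setminus G_n)}(F_1,F_2))=\md(\Gamma_{\C\setminus K_{n-1}}(F_1,F_2)).
\]
Second, writing $U_{n,\epsilon}$ for the open $\epsilon$-neighborhood of $G_n$, the extra modulus accrued by replacing $G_n$ with $U_{n,\epsilon}$ comes from curves meeting the shrinking set $K_{n-1}\cap(U_{n,\epsilon}\setminus G_n)$, which lies in a thin tube around $G_n$ at positive distance from $F_1\cup F_2$; a tube-modulus estimate shows this extra modulus tends to $0$ as $\epsilon\to 0^+$. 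Hence I can choose $\epsilon_n>0$ so small that $K_n:=K_{n-1}\setminus U_{n,\epsilon_n}$ satisfies
\[
\md(\Gamma_{\C\setminus K_n}(F_1,F_2))\leq\md(\Gamma_{\C\setminus K_{n-1}}(F_1,F_2))+\delta/2^{n+1},
\]
while every component of $K_n$ is confined to a single grid cell and has diameter at most $\sqrt{2}/n$.

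Setting $K'=\bigcap_nK_n$, each component of $K'$ has diameter at most $\sqrt{2}/n$ for every $n$ and is therefore a point, so $K'$ is totally disconnected. The telescoping estimate combined with continuity of modulus for the increasing open sets $\C\setminus K_n\uparrow\C\setminus K'$ yields
\[
\md(\Gamma_{\C\setminus K'}(F_1,F_2))=\lim_n\md(\Gamma_{\C\setminus K_n}(F_1,F_2))\leq M-\delta+\sum_{n\geq1}\delta/2^{n+1}=M-\delta/2<M,
\]
so $K'$ fails the $\NED$ condition against the same $F_1,F_2$ and is not $S$-removable. The main technical obstacle is the second modulus-theoretic ingredient of the grid step --- the tube-modulus estimate, where one must show that the modulus of curves meeting the shrinking thin neighborhood of $G_n\cap K$ tends to zero as $\epsilon\to0^+$. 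This requires combining the $\h^1$-nullity of each $L\cap K$ with a careful continuity argument for the $2$-modulus on decreasing families of open sets, and exploiting the positive distance between the grid and $F_1\cup F_2$ to rule out any residual modulus contribution in the limit.
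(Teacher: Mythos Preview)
Your strategy is genuinely different from the paper's, and the two ``easy'' cases are correct and pleasant: a fat Cantor set in a segment, or a positive-measure totally disconnected slice when $\h^2(K)>0$, each yields a non-$S$-removable subset by classical facts.

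The genuine gap is exactly where you flag it, and it is more serious than you suggest. In the main case you need
\[
\md\bigl(\Gamma_{\C\setminus(K_{n-1}\setminus U_{n,\epsilon})}(F_1,F_2)\bigr)\;\longrightarrow\;\md\bigl(\Gamma_{\C\setminus K_{n-1}}(F_1,F_2)\bigr)\quad\text{as }\epsilon\to 0.
\]
As $\epsilon\downarrow 0$ the obstacle $K_{n-1}\setminus U_{n,\epsilon}$ \emph{increases} to $K_{n-1}\setminus G_n$, so the curve families are \emph{decreasing}; modulus is in general not upper semicontinuous along decreasing families, so there is no soft continuity principle to invoke. Concretely, the extra curves you must control are those in $\C\setminus K_n$ that hit $K_{n-1}\cap U_{n,\epsilon}$. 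This target set shrinks only to $K_{n-1}\cap G_n$, not to a point, and $K_{n-1}\cap U_{n,\epsilon}$ need not lie near $K_{n-1}\cap G_n$ at all: $K_{n-1}$ may accumulate on $L$ along a set whose projection to $L$ is much larger than $L\cap K_{n-1}$. The $\h^1$-nullity of $L\cap K$ gives no handle on that accumulation. Your proposed ``thin tube'' bound also fails outright: the modulus of curves from $F_1$ at distance $d$ to a strip of width $2\epsilon$ and length $\ell$ is of order $\ell/d$, not $o(1)$ as $\epsilon\to 0$. (Separately, your intermediate claim that the curves meeting $L\cap K_{n-1}$ have modulus zero is not a ``standard crossing estimate''; sets on a line with $\h^1$-measure zero need not be polar. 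The displayed equality you derive from it can, however, be justified directly from the $\NED$ property of $\h^1$-null sets.)

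For comparison, the paper avoids this obstruction entirely. It first reduces to a single non-degenerate continuum $K$. If $K$ contains a Jordan arc $\widetilde K$, one removes a small open neighborhood of a countable dense subset of $\widetilde K$: here the target set really is countable, so an honest annulus estimate gives arbitrarily small modulus, and the remainder is totally disconnected. If $K$ contains no arc, the paper works in a complementary component $\Omega$ with a weight $\rho$ that blows up near $\partial\Omega$, and constructs the open set $U$ from half-balls trapped between $\partial\Omega$ and nearby parallel components produced via the Whyburn-type characterization of local connectedness; every curve reaching $U$ from a fixed continuum in $\Omega$ is forced to travel a definite length inside a region where $\rho$ is large. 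Total disconnectedness of $\partial\Omega\setminus U$ then follows because any surviving component would have to be locally connected, hence contain an arc, contradicting the hypothesis. This topological mechanism is what replaces your unproved tube estimate.
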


The corresponding statement for $CH$-removable sets remains an open problem \cite{Bishop:flexiblecurves}*{Question 4}. 
 
\subsection*{Acknowledgment}
The author would like to thank Malik Younsi for his comments on an earlier draft of the paper.

\section{Totally disconnected sets and S-removability}\label{section:disconnected}

In this section we prove Theorem \ref{theorem:totally_disconnected_ned}. We provide some background first. 

\subsection{Hausdorff measure}
Let $X$ be a metric space. For $s\geq 0$ the \textit{$s$-dimensional Hausdorff measure} $\mathscr H^s(E)$ of a set $E\subset X$ is defined by
$$\mathscr{H}^{s}(E)=\lim_{\delta \to 0} \mathscr{H}_\delta^{s}(E)=\sup_{\delta>0} \mathscr{H}_\delta^{s}(E),$$
where
$$
\mathscr{H}_\delta^{s}(E)=\inf \left\{ c(s)\sum_{j=1}^\infty (\operatorname{diam}(U_j))^s: E \subset \bigcup_j U_j,\, \operatorname{diam}(U_j)<\delta \right\}
$$
for a normalizing constant $c(s)>0$ so that the $n$-dimensional Hausdorff measure agrees with Lebesgue measure in $\R^n$. The Hausdorff dimension of $E$ is defined as 
$$\dim_\h(E)= \inf\{s\geq 0: \h^s(E)=0\}.$$
If $\delta=\infty$, the quantity $\mathscr{H}_\infty^{s}(E)$ is called the \textit{$s$-dimensional Hausdorff content} of $E$ and is an outer measure on subsets of $X$. An elementary fact is that
\begin{align*}
\textrm{$\h^s(E)=0$ if and only if $\h^s_{\infty}(E)=0$. }
\end{align*}
We always have
$$\min \{\h^1(E),\diam(E) \}\geq \h^1_\infty(E)$$
and if $E$ is connected, then 
\begin{align*}
\h^1_{\infty}(E)= \diam(E).
\end{align*}
See \cite{BuragoBuragoIvanov:metric}*{Lemma 2.6.1, p.~53} for an argument.

\subsection{Modulus}

Let $\Gamma$ be a family of curves in a metric surface $X$. A Borel function $\rho\colon X\to [0,\infty]$ is \textit{admissible} for $\Gamma$ if $\int_\gamma \rho\, ds\geq 1$ for all rectifiable paths $\gamma\in \Gamma$. We define the \textit{$2$-modulus} of $\Gamma$ as
$$\mod \Gamma=\inf_\rho \int_X \rho^2 \, d\h^2,$$
where the infimum is taken over all admissible functions $\rho$ for $\Gamma$. In this section we only consider modulus in the plane, where Hausdorff $2$-measure agrees with the Lebesgue measure. We will use the following standard facts about modulus in a space $X$.

\begin{enumerate}[label=(M\arabic*)]
\item The modulus $\mod$ is an outer measure in the space of all curves in $X$. In particular, it obeys the monotonicity and countable subadditivity laws. \label{m:outer_measure}

\smallskip

\item If every path of a family $\Gamma_1$ has a subpath lying in a family $\Gamma_2$, then $\mod \Gamma_1\leq \mod \Gamma_2.$\label{m:subordinate}

\smallskip

\item Modulus is invariant under conformal maps on subsets of the complex plane.

\smallskip

\item For $x\in \C$ and $0<r<R$ let $\Gamma=\Gamma(A(x;r,R))$ be the family of curves in $\C$ joining the boundary components of the annulus $A(x;r,R)=B(x,R)\setminus \br B(x,r)$.  Then 
$$\mod \Gamma= 2\pi \left(\log\frac{R}{r} \right)^{-1}.$$\label{m:ring}
\end{enumerate}
See \cite{Vaisala:quasiconformal}*{Chapter 1} and \cite{HeinonenKoskelaShanmugalingamTyson:Sobolev}*{Sections 5.2--5.3} for more details about modulus and proofs of these facts.

\subsection{Proof of Theorem \ref{theorem:totally_disconnected_ned}}
For sets $E,F,G$ in a metric space $X$ denote by $\Gamma(E,F;G)$ the family of curves $\gamma\colon [a,b]\to X$ such that $\gamma(a)\in E$, $\gamma(b)\in F$, and $\gamma((a,b))\subset G$.  A \textit{quadrilateral} in a surface $X$ is a closed Jordan region $Q$ together with a partition of $\partial Q$ into four non-overlapping edges $\zeta_1,\zeta_2,\zeta_3,\zeta_4\subset \partial Q$ in cyclic order. When we refer to a quadrilateral $Q$, it is implicitly understood that there exists such a marking on its boundary and we write $Q=Q(\zeta_1,\zeta_2,\zeta_3,\zeta_4)$ to indicate the marking. We define $\Gamma(Q)=\Gamma(\zeta_1,\zeta_3;Q)$ and $\Gamma^*(Q)=\Gamma(\zeta_2,\zeta_4;Q)$.

Ahlfors and Beurling characterized $S$-removable sets as sets the removal of which does not affect modulus in the following precise sense.

\begin{theorem}[\cite{AhlforsBeurling:Nullsets}*{Theorem 9}]\label{theorem:ned_removable}
A compact set $K\subset \C$ is $S$-removable if and only if for each quadrilateral $Q\subset \C$ we have
$$\mod \Gamma(Q) =\mod \Gamma(\zeta_1,\zeta_3; Q\setminus K).$$
\end{theorem}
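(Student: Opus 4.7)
The plan is to establish the two implications of the equivalence separately, using modulus both as a bookkeeping device and as the link to conformal maps.

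For the direction ``modulus condition $\Rightarrow$ $S$-removable'': let $f\colon \C\setminus K\to \C$ be a conformal embedding; the goal is to extend $f$ to a conformal self-map of $\widehat \C$. First, I would argue that $K$ has Lebesgue measure zero by testing the equality against a large quadrilateral enclosing $K$, since positive area of $K$ would force the extremal admissible density on $Q$ to have strictly smaller $L^2$-energy than the one on $Q\setminus K$. Second, I would show that for each $z_0 \in K$ the cluster set $\clu(f,z_0)$ is a singleton. To do this I approximate the Euclidean annulus $A(z_0;r,R)$ by finitely many overlapping quadrilaterals, apply the hypothesized modulus equality to each, and combine with the conformal invariance in (M3) to deduce that the image curve family has modulus $2\pi/\log(R/r) \to 0$ as $r\to 0$; a positive diameter of $\clu(f,z_0)$ would give, via a capacity lower bound on the modulus of curves joining two continua of bounded-below diameter in a bounded image set, a contradiction. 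Once $f$ has a continuous extension $\tilde f\colon \C\to \C$ that is holomorphic outside the measure-zero set $K$, Weyl's lemma gives that $\tilde f$ is holomorphic on all of $\C$, and an injective holomorphic self-map of $\widehat\C$ is a M\"obius transformation.

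For the direction ``$S$-removable $\Rightarrow$ modulus condition'': fix a quadrilateral $Q$. The inequality $\mod \Gamma(\zeta_1,\zeta_3;Q\setminus K)\le \mod \Gamma(Q)$ is immediate from (M1) since $\Gamma(\zeta_1,\zeta_3;Q\setminus K)\subset \Gamma(Q)$. For the reverse inequality, let $u$ be the potential function for $\Gamma(\zeta_1,\zeta_3;Q\setminus K)$: the harmonic function on $Q\setminus K$ with Dirichlet values $0,1$ on $\zeta_1,\zeta_3$ and Neumann boundary conditions on $\zeta_2\cup \zeta_4$, normalized so that $|\nabla u|$ is an extremal admissible density realizing $\mod \Gamma(\zeta_1,\zeta_3;Q\setminus K)$. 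A locally defined harmonic conjugate $v$ of $u$ may be multi-valued, with periods given by the fluxes of $du$ around components of $K$. Reflecting $Q$ across $\zeta_2$ and $\zeta_4$ (using the Neumann conditions) and then iteratively across successive translates of $\zeta_1,\zeta_3$ unfolds $Q\setminus K$ into a simply connected domain $\Omega$ minus a compact set $K^*$ assembled from reflected copies of $K$; on $\Omega\setminus K^*$ the conjugate $v$ becomes single-valued, so $F=u+iv$ is a single-valued conformal embedding of $\Omega\setminus K^*$ into a rectangle of height $\mod \Gamma(\zeta_1,\zeta_3;Q\setminus K)$. Invoking $S$-removability of $K$, and its invariance under the (anti-)conformal reflections used to build $K^*$, one extends $F$ conformally across $K^*$; the extended map identifies $Q$ with the standard modulus-realizing rectangle, and uniqueness of the conformal modulus gives the reverse inequality.

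The principal obstacle lies in the $(\Rightarrow)$ direction: handling the multi-valuedness of $v$ via the reflection construction, and verifying that the compact set $K^*$ obtained by reflections remains $S$-removable. The latter is not automatic because $S$-removability need not be preserved under arbitrary countable unions; one must exploit the disjoint and conformally-related placement of the reflected copies, or, alternatively, localize the argument to a single reflected cell and propagate by a capacitary continuation. A secondary, less serious, issue in the $(\Leftarrow)$ direction is deriving the ring-modulus version of the hypothesis from the quadrilateral version, which I would handle by the standard approximation of a round annulus by four overlapping quadrilaterals.
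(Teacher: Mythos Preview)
The paper does not contain a proof of this statement: Theorem~\ref{theorem:ned_removable} is quoted verbatim as a classical result, with the citation \cite{AhlforsBeurling:Nullsets}*{Theorem 9} and nothing more. There is therefore no ``paper's own proof'' to compare your proposal against; the author treats this equivalence as black-box input to the proof of Theorem~\ref{theorem:totally_disconnected_ned}.

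As a stand-alone attempt at the Ahlfors--Beurling theorem your sketch is in the right spirit, but the direction ``$S$-removable $\Rightarrow$ modulus condition'' is not under control. Your reflection scheme is supposed to kill the periods of the harmonic conjugate $v$, but reflecting across $\zeta_2,\zeta_4$ does not in general make the unfolded domain $\Omega$ simply connected unless the reflected copies of $K$ stay disjoint from one another and from the reflected boundaries---something you have no reason to expect for an arbitrary quadrilateral $Q$ and compact $K$ meeting $\partial Q$. Even when the copies are disjoint, you correctly flag that $S$-removability of the assembled set $K^*$ is not automatic; invoking ``disjoint and conformally-related placement'' is a hope, not an argument, because the reflected pieces accumulate and the union is not compact after infinitely many reflections. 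Ahlfors and Beurling avoid this entirely: they work through the equivalence with the class $\mathit{NED}$ via analytic capacity and extremal length duality rather than by building a global conformal map from the potential. If you want a self-contained argument, a cleaner route is to show that $S$-removability is equivalent to removability for the Dirichlet class (functions with finite Dirichlet integral), and then use that the extremal density for $\Gamma(\zeta_1,\zeta_3;Q\setminus K)$ is $|\nabla u|$ for a Dirichlet-finite harmonic $u$ which, by that removability, extends harmonically across $K$ with the same Dirichlet integral---yielding the missing inequality without any reflection or multi-valued conjugates.

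Your ``modulus condition $\Rightarrow$ $S$-removable'' direction is closer to complete; the only point needing care is the passage from the quadrilateral hypothesis to the annulus estimate, which you acknowledge. The four-quadrilateral approximation does work, but note that you need the hypothesis for quadrilaterals whose sides may touch $K$, so the overlap bookkeeping should be done in the complement of $K$.
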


We establish some auxiliary results before presenting the proof of Theorem \ref{theorem:totally_disconnected_ned}. 

\begin{lemma}\label{lemma:totally_disconnected_ned_sufficient}
Let $\Omega \subset {\C}$ be a domain such that $\partial \Omega$ is a non-degenerate continuum and for each $\varepsilon>0$ and each continuum $E\subset \Omega$  there exists an open set $U\subset N_{\varepsilon}(\partial \Omega)$ with the property that $K'=\partial \Omega\setminus U$ is totally disconnected and
$$\mod \Gamma(E, U; \C\setminus K')<\varepsilon.$$
Then $\partial \Omega$ has a totally disconnected compact subset that is not $S$-removable. 
\end{lemma}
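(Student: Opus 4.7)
The plan is to invoke the Ahlfors--Beurling modulus criterion (Theorem~\ref{theorem:ned_removable}): it will suffice to produce a totally disconnected compact set $K' \subset \partial\Omega$ and a quadrilateral $Q = Q(\zeta_1,\zeta_2,\zeta_3,\zeta_4)$ satisfying
\[ \mod \Gamma(\zeta_1,\zeta_3; Q\setminus K') \,<\, \mod \Gamma(Q). \]
The hypothesis will supply $K'$ and deliver the modulus drop, once the quadrilateral is set up so that $\partial\Omega$ acts as an unavoidable barrier between $\zeta_1$ and $\zeta_3$.

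First I would construct a quadrilateral $Q$ with three properties: (i) every curve in $\Gamma(\zeta_1,\zeta_3;Q)$ meets $\partial\Omega$ inside $Q^\circ$; (ii) $\zeta_1$ is a continuum lying inside $\Omega$ with $d_0 := \dist(\zeta_1,\partial\Omega) > 0$; (iii) $\delta := \mod \Gamma(Q) > 0$. If $\C \setminus \overline\Omega \neq \emptyset$, this is straightforward: take $Q$ a rectangle whose opposite edges $\zeta_1, \zeta_3$ lie in $\Omega$ and in $\C \setminus \overline\Omega$ respectively, both at positive distance from $\partial\Omega$. In general, using the non-degeneracy of the continuum $\partial\Omega$, pick two points $a,b \in \partial\Omega$ with $|a-b| > 0$ and an irreducible subcontinuum $C \subset \partial\Omega$ between them; then thicken $C$ into a thin tube quadrilateral whose end edges $\zeta_2, \zeta_4$ are pierced by $C$ near $a$ and $b$, while the long side $\zeta_1$ is drawn inside $\Omega$ at positive distance from all of $\partial\Omega$. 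The span $C \subset \partial\Omega \cap \overline Q$ from $\zeta_2$ to $\zeta_4$ topologically separates $\zeta_1$ from $\zeta_3$ in $Q$, giving (i).

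Next, apply the hypothesis to the continuum $E := \zeta_1 \subset \Omega$ with $\varepsilon := \tfrac{1}{2}\min\{\delta, d_0\} > 0$, obtaining an open set $U \subset N_\varepsilon(\partial\Omega)$ such that $K' := \partial\Omega \setminus U$ is totally disconnected and $\mod \Gamma(\zeta_1, U; \C\setminus K') < \varepsilon$. Since $\dist(\zeta_1,\partial\Omega) = d_0 > \varepsilon$, we have $\zeta_1 \cap U = \emptyset$. Now let $\gamma\colon [a,b] \to Q$ be any curve in $\Gamma(\zeta_1,\zeta_3;Q\setminus K')$. By (i), $\gamma$ meets $\partial\Omega$ in $Q^\circ$, and since $\gamma$ avoids $K'$, it meets $\partial\Omega \setminus K' \subset U$. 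Let $t_0 \in (a,b]$ be the first time $\gamma(t) \in U$; because $\gamma(a) \in \zeta_1 \subset \C \setminus U$ we have $t_0 > a$, and $\gamma|_{[a,t_0]} \in \Gamma(\zeta_1, U; \C\setminus K')$. The subordination law (M2) then gives
\[ \mod \Gamma(\zeta_1,\zeta_3;Q\setminus K') \,\leq\, \mod \Gamma(\zeta_1, U; \C\setminus K') \,<\, \varepsilon \,\leq\, \delta/2 \,<\, \mod \Gamma(Q), \]
so Theorem~\ref{theorem:ned_removable} shows that $K'$ is not $S$-removable; it is the required totally disconnected compact subset of $\partial\Omega$.

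The step I expect to be the main obstacle is the topological construction of $Q$ in the general case, in particular when $\C \setminus \overline\Omega$ is empty and $\partial\Omega$ does not locally separate the plane: producing a tube quadrilateral around an irreducible subcontinuum of a possibly wild $\partial\Omega$, with one long side lying at positive distance from the entire set $\partial\Omega$, requires a careful continuum-theoretic argument. Once that construction is in hand, the remainder is a clean application of (M2) and the Ahlfors--Beurling criterion.
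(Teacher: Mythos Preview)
Your overall strategy---use the Ahlfors--Beurling criterion (Theorem~\ref{theorem:ned_removable}), build a quadrilateral $Q$ with $\zeta_1\subset\Omega$ such that every curve of $\Gamma(Q)$ must hit $\partial\Omega$, then apply the hypothesis with $E=\zeta_1$ and a small $\varepsilon$ and finish via the subordination property \ref{m:subordinate}---is exactly the paper's approach, and your modulus computation at the end is essentially identical to the paper's.

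The one place the paper does something different, and simpler, is the construction of $Q$. Rather than splitting on whether $\C\setminus\overline\Omega$ is nonempty, the paper splits on whether $\Omega$ is bounded or unbounded. The bounded case is your easy case. In the unbounded case, instead of your tube around an irreducible subcontinuum (which, as you anticipated, would require delicate continuum theory for a wild $\partial\Omega$), the paper simply chooses a straight line $L$ that disconnects the compact continuum $\partial\Omega$, fattens $L$ to a thin infinite strip, and takes $Q$ to be a long rectangle in the strip with its short sides $\zeta_1,\zeta_3$ placed far enough out to miss $\partial\Omega$ entirely. Connectedness of $\partial\Omega$ then forces every curve of $\Gamma(Q)$ to cross $\partial\Omega$. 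This sidesteps the obstacle you flagged: no tube, no irreducibility, just a line and compactness.
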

\begin{proof}
Consider a rectangle $Q=Q(\zeta_1,\zeta_2,\zeta_3,\zeta_4)$, where $\zeta_1\subset \Omega$, $\zeta_1$ and $\zeta_3$ are disjoint from $\partial \Omega$, and  every curve of $\Gamma(Q)$ intersects $\partial \Omega$; see Figure \ref{figure:rectangleQ}.
\begin{figure}
\begin{tikzpicture}
	\begin{scope}	
	\draw[rounded corners=7] (0.5,-1)--(1.25,-1.4)--(1.5,-0)--(1.75,0.4)--(2,-0.6)--(2.5, 1) -- (2,1.5)--(1,0.5)--(0.5,1)--(0,0)--cycle;
	
	\draw (1,-1.2) rectangle (3,-0.3);
	
	\node[left] at (1.1,-0.7) {$\zeta_1$};
	\node[right] at (3,-0.7) {$\zeta_3$};
	\node at (1,0) {$\Omega$};
	\end{scope}
	
	\begin{scope}[shift={(6,0)}]
	\draw (0,-0.8) rectangle (3,0.8);	
	\draw[rounded corners=7] (0.5,-1)--(1.25,-1.4)--(1.5,-0)--(1.75,0.4)--(2,-0.6)--(2.5, 1) -- (2,1.5);
	
	\node[left] at (0,0) {$\zeta_1$};
	\node[right] at (3,0) {$\zeta_3$};
	\node[left] at (1.5,0) {$\partial \Omega$};
	\end{scope}
\end{tikzpicture}
\caption{The rectangle $Q$ in the cases that $\Omega$ is bounded and unbounded, respectively.}\label{figure:rectangleQ}
\end{figure}
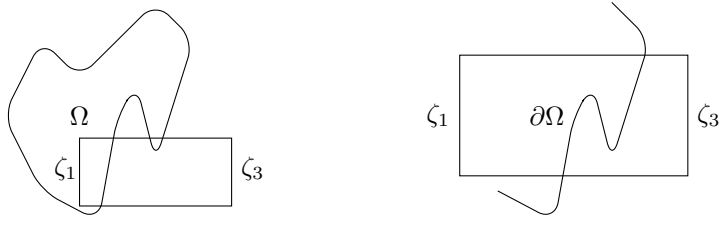
The existence of $Q$ can be justified as follows. If $\Omega$ is bounded then one can simply take a side $\zeta_1$ inside $\Omega$ and a side $\zeta_3$ outside $\Omega$ to construct $Q$. If $\Omega$ is unbounded then one starts with a line that disconnects the boundary of $\Omega$ (this requires the boundary of $\Omega$ to have more than one point). Then we slightly fatten the line to obtain an infinite strip with the property that any (infinite) line in the strip intersects $\partial \Omega$ due to connectedness. One can now take a rectangle in the strip.

Now, let $\varepsilon>0$ such that 
$$\varepsilon< \min \{\dist(\zeta_1\cup \zeta_3,\partial \Omega) , \mod\Gamma(Q)\}.$$
By assumption, there exists an open set $U\subset N_{\varepsilon}(\partial \Omega)$ such that $K'=\partial \Omega\setminus U$ is totally disconnected and 
$$\mod \Gamma(\zeta_1, U; \C\setminus K')<\varepsilon.$$
Every path $\gamma\in \Gamma(\zeta_1,\zeta_3;Q\setminus K')$ has a subpath in $\Gamma(\zeta_1,U;\C\setminus K')$. Therefore, by property \ref{m:subordinate} we have
$$\mod \Gamma(\zeta_1,\zeta_3;Q\setminus K')\leq \mod \Gamma(\zeta_1,U;\C\setminus K')<\varepsilon<\mod \Gamma(Q).$$
With the aid of Theorem \ref{theorem:ned_removable}, this shows that $K'$ is not $S$-removable. 
\end{proof}

\begin{lemma}\label{lemma:countable}
Let $G\subset \C$ be a countable set and $E\subset \C\setminus  G$ be a compact set. Then for each $\varepsilon>0$ there exists an open set $U$ with $G\subset U\subset N_{\varepsilon}(G)$ such that 
$$\mod \Gamma(E,U;\C)<\varepsilon.$$
\end{lemma}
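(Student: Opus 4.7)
The plan is to surround each point of the countable set $G$ by a small disk, chosen so that the capacities of the resulting annuli form a summable series. Then every curve from $E$ to this neighborhood must cross one of these annuli, so modulus subadditivity together with the explicit formula (M4) gives the desired bound.

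More precisely, enumerate $G=\{g_n\}_{n\in \N}$ (the finite case is easier and follows by the same argument). Since $E$ is compact and $g_n\notin E$, the distance $d_n:=\dist(g_n,E)$ is strictly positive. I would fix $R_n<\min\{\varepsilon,d_n\}$ and then pick $r_n\in(0,R_n)$ small enough that
\[
\frac{2\pi}{\log(R_n/r_n)}<\frac{\varepsilon}{2^{n+1}},
\]
which is possible because $2\pi/\log(R_n/r_n)\to 0$ as $r_n\to 0$. Set $U=\bigcup_n B(g_n,r_n)$. Then $U$ is open, contains $G$, and since $r_n<R_n<\varepsilon$ it is contained in $N_\varepsilon(G)$, as required.

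The key geometric observation is that every curve $\gamma\in \Gamma(E,U;\C)$ contains a subpath that joins the two boundary components of the closed annulus $\overline{A(g_n;r_n,R_n)}$ for some $n$. Indeed, $\gamma$ has initial point in $E$, which lies outside $B(g_n,R_n)$ since $R_n<d_n$, and terminal point in some $B(g_n,r_n)$; letting $c$ be the last time $\gamma$ meets $\partial B(g_n,R_n)$ and $d$ the first subsequent time it meets $\partial B(g_n,r_n)$, the subpath $\gamma|_{[c,d]}$ lies in $\overline{A(g_n;r_n,R_n)}$ and connects the two boundary circles. By property (M2) applied to each $n$, and then (M1) (countable subadditivity) and (M4) (the ring modulus formula), one concludes
\[
\mod\Gamma(E,U;\C)\le \sum_{n=1}^{\infty}\mod\Gamma(A(g_n;r_n,R_n))=\sum_{n=1}^{\infty}\frac{2\pi}{\log(R_n/r_n)}<\sum_{n=1}^{\infty}\frac{\varepsilon}{2^{n+1}}=\frac{\varepsilon}{2}<\varepsilon.
\]

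There is no serious obstacle here; the argument is essentially routine once one has the ring modulus at hand. The only care needed is in the radius selection, where the geometric factor $2^{-n-1}$ tames the possibly infinite sum arising from the countability of $G$. It is the countability of $G$, rather than any measure-theoretic smallness, that drives the proof, and this is precisely why the hypothesis is formulated this way.
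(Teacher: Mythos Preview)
Your proof is correct and follows essentially the same approach as the paper: surround each point of $G$ by a small disk so that any curve from $E$ into the resulting neighborhood must cross a thin annulus of small modulus, then use subadditivity \ref{m:outer_measure}, the subordination principle \ref{m:subordinate}, and the ring formula \ref{m:ring}. The only cosmetic difference is that the paper first reduces via \ref{m:outer_measure} to the single-point case and then lets $r\to 0$, whereas you handle all points simultaneously with geometric weights $\varepsilon/2^{n+1}$; these are two ways of packaging the same estimate.
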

\begin{proof}
By the countable subadditivity of modulus (see \ref{m:outer_measure}), it suffices to show the conclusion assuming that $G$ contains only one point $x_0$. Let $R>0$ such that $B(x_0,R)\cap E=\emptyset$. For $0<r<R$, let $U_r=B(x_0,r)$ and note that each path in $\Gamma(E,U_r;\C)$ has a subpath in $\Gamma(A(x_0;r,R))$. By \ref{m:subordinate} and \ref{m:ring}, we have
$$\mod \Gamma(E,U_r;\C) \leq \mod \Gamma(A(x_0;r,R))= 2\pi \left(\log\frac{R}{r} \right)^{-1}. $$
As $r\to 0$, this converges to $0$, so the conclusion follows. 
\end{proof}

For two distinct parallel lines $L_1,L_2$ in $\C$ denote by $W(L_1,L_2)$ the closed strip that they bound. We will use the following characterization of local connectedness \cite{LoridantLuoYang:plane}*{Theorem 2}; see also \cite{Whyburn:topology}*{Theorem (12.1), p.\ 18} for a similar statement.

\begin{theorem}\label{theorem:characterization_locally_connected}
Let $K\subset \C$ be a continuum. Then $K$ is locally connected if and only if for every pair of distinct parallel lines $L_1,L_2$ the intersection $W(L_1,L_2)\cap K$ has at most finitely many components that intersect both $L_1$ and $L_2$.  
\end{theorem}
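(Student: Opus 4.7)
The plan is to prove the two directions separately; both rely on standard tools from continuum theory.

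For the forward direction ($\Rightarrow$), suppose $K$ is locally connected. By the Hahn--Mazurkiewicz theorem there is a continuous surjection $\phi\colon [0,1]\to K$. Set $d=\dist(L_1,L_2)$ and let $\delta>0$ be chosen by uniform continuity of $\phi$ so that $|s-t|<\delta$ implies $|\phi(s)-\phi(t)|<d$. To each component $C$ of $K\cap W(L_1,L_2)$ meeting both $L_1$ and $L_2$ I would associate a closed subinterval $[a_C,b_C]\subset[0,1]$ with $\phi([a_C,b_C])\subset C$, $\phi(a_C)\in L_1$, and $\phi(b_C)\in L_2$. The existence of such an interval is the key technical point: it follows from a component-chasing argument inside the closed preimage $\phi^{-1}(C)$, using connectedness of $C$ together with the fact that whenever a subpath of $\phi$ lies in the complement of $L_1\cup L_2$ and has endpoints on $L_1$ and $L_2$, an intermediate-value argument forces this subpath to be entirely inside the strip $W$. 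The endpoint condition then gives $b_C-a_C\geq\delta$, and disjointness of the components $C,C'$ forces the corresponding intervals to be disjoint in $[0,1]$, so only finitely many fit.

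For the reverse direction ($\Leftarrow$), suppose $K$ is not locally connected. By the classical continuum-of-convergence theorem there exist pairwise disjoint subcontinua $L_n\subset K$ converging in Hausdorff metric to a non-degenerate subcontinuum $L^*\subset K$ with $L_n\cap L^*=\emptyset$ for every $n$. Pick $a,b\in L^*$ realizing $\diam L^*=:d>0$, set coordinates so $a=0$ and $b=d$ on the real axis, and take vertical lines $L_1=\{\re z=d/3\}$ and $L_2=\{\re z=2d/3\}$, bounding the strip $W$. For $n$ large, Hausdorff convergence supplies $a_n,b_n\in L_n$ lying respectively to the left of $L_1$ and to the right of $L_2$. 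Since $L_n$ is a continuum with points on opposite sides of $W$, a standard connectedness argument (a connected set cannot be disconnected by a closed set without some component of the closed set meeting both sides) yields a component $C_n$ of $L_n\cap W$ meeting both $L_1$ and $L_2$. Disjointness of the $L_n$ gives disjointness of the $C_n$.

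The main obstacle is to show that infinitely many $C_n$ lie in distinct components of $K\cap W$, rather than being merged by bridges in $K\cap W$ outside the $L_n$. If, after passing to a subsequence, all $C_n$ were contained in a single component $D$ of $K\cap W$, then closedness of $D$ together with the Hausdorff accumulation $C_n\to C_\infty\subset L^*\cap W$ would force $D\cap L^*\neq\emptyset$. Such a $D$ would be a closed connected subset of $K$ simultaneously meeting $L^*$ and infinitely many $L_n$ at points of arbitrarily small pairwise separation, yielding connected neighborhoods of points of $L^*$ in $K$ of arbitrarily small diameter that still meet infinitely many $L_n$. This contradicts the very structure of a continuum of convergence, in which the $L_n$ approach $L^*$ without being joined to it inside $K$. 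Verifying this final contradiction rigorously is the most delicate step of the proof.
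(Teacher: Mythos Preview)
The paper does not prove this statement; it is quoted from \cite{LoridantLuoYang:plane}*{Theorem 2} (with a related version in Whyburn), so there is no paper proof to compare against. I comment only on your proposal, which has a genuine gap in each direction.

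\textbf{Forward direction.} The key step --- that every crossing component $C$ contains $\phi([a_C,b_C])$ for some subinterval with $\phi(a_C)\in L_1$ and $\phi(b_C)\in L_2$ --- is false for a general Peano map. Take $W=\{0\le \re z\le 1\}$ and $K=\partial([-1,2]\times[-1,1])\cup([-1,2]\times\{0\})$, a locally connected continuum; the middle crossing component of $K\cap W$ is $C=[0,1]\times\{0\}$. One can parametrize $K$ by a continuous surjection $\phi$ that covers $C$ in two excursions, one from $(0,0)$ to $(1/2,0)$ and back, the other from $(1,0)$ to $(1/2,0)$ and back, using the outer rectangle to travel between the access points $(0,0)$ and $(1,0)$ without re-entering $C$. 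For this $\phi$ no subinterval with image in $C$ has endpoints on opposite boundary lines, so your counting argument never sees $C$. A cleaner route avoids Hahn--Mazurkiewicz: a Peano continuum is uniformly locally connected, so if infinitely many crossing components existed, a Hausdorff limit of a subsequence would contain a point $x$ in the open strip, and any small connected neighborhood of $x$ in $K$ would lie in the open strip yet meet two distinct components of $K\cap W$, a contradiction.

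\textbf{Reverse direction.} Your final paragraph is not a contradiction as written. The continuum-of-convergence theorem gives $L_n\to L^*$ with $L_n\cap L^*=\emptyset$, but it places no obstruction on the existence of a large connected set $D\subset K$ (such as a component of $K\cap W$) that meets $L^*$ and infinitely many $L_n$; indeed $K$ itself is such a set. The sentence ``this contradicts the very structure of a continuum of convergence'' is therefore unsupported. To make the argument work one must choose the $L_n$ from the outset as \emph{components} of $K\cap A$ for a fixed closed set $A$ (for instance a closed ball around a point where $K$ fails to be connected im kleinen, which is how the convergence continuum actually arises), so that no connected subset of $K\cap A$ can meet two of them; and then one must place the strip $W$ so that any component of $K\cap W$ touching two of the $L_n$ is forced to remain in $A$. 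This geometric placement is precisely the missing content of your sketch.
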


We also need the next topological lemma. See Figure \ref{figure:circles} for an illustration of the assumptions.

\begin{lemma}\label{lemma:convergence}
Let $F\subset \C$ be a continuum and $\{F_n\}_{n\in \N}$ be a sequence of continua in $\C$ that converges to $F$ in the Hausdorff sense. For each $n\in \N$ let $V_n$ be a bounded component of $\C\setminus (F\cup F_n)$ that is not contained in a bounded component of $\C\setminus F$. Then for each $\delta>0$ we have $V_n\subset N_\delta(F)$ for all sufficiently large $n\in \N$. 
\end{lemma}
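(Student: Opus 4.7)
The plan is to argue by contradiction using a Hausdorff-limit compactness argument on the closures $\overline{V_n}$.

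First I would record the topological setup. Since $V_n$ is bounded, connected, disjoint from $F$, and by hypothesis not contained in any bounded component of $\C\setminus F$, it must lie in the unbounded component $U_\infty$ of $\C\setminus F$. Moreover, as $V_n$ is a connected component of the open set $\C\setminus(F\cup F_n)$, standard reasoning forces $\partial V_n\subset F\cup F_n$. Suppose toward a contradiction that there exist $\delta>0$, a subsequence which I relabel $\{V_n\}$, and points $x_n\in V_n$ with $\dist(x_n,F)\ge \delta$.

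Next I would establish a uniform bound on the $\overline{V_n}$. The key elementary fact is that any bounded open planar set lies in the convex hull of its boundary: every line through an interior point exits the set on both sides at boundary points. Applied to $V_n$, this gives $V_n \subset \operatorname{conv}(F\cup F_n)$. For $n$ large, the Hausdorff convergence $F_n\to F$ yields $F\cup F_n\subset N_1(F)$, so $V_n$ lies in the compact convex set $\operatorname{conv}(N_1(F))$. By Blaschke's selection theorem, passing to a further subsequence, $\overline{V_n}\to K$ in Hausdorff distance and $x_n\to x^*$ with $\dist(x^*,F)\ge \delta$, in particular $x^*\notin F$. Since $x_n\in V_n\subset U_\infty$ and $\partial U_\infty\subset F$ (because $U_\infty$ is a component of the open set $\C\setminus F$), the limit $x^*\in\overline{U_\infty}\setminus F$ must belong to $U_\infty$.

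To close out, I would pick $z\in U_\infty$ with $|z|$ larger than the radius of the compact set containing every $\overline{V_n}$, join $x^*$ to $z$ by a path $\gamma$ in the open connected set $U_\infty$, and set $\eta:=\dist(\gamma,F)>0$. For $n$ large, $F_n\subset N_{\eta/2}(F)$ so $\gamma\cap(F\cup F_n)=\emptyset$; also, for $n$ large, $x_n$ lies in a small ball about $x^*$ that is disjoint from $F\cup F_n$. Concatenating the short segment from $x_n$ to $x^*$ with $\gamma$ produces a connected subset of $\C\setminus(F\cup F_n)$ joining $x_n\in V_n$ to $z\notin V_n$, contradicting that $V_n$ is a connected component of $\C\setminus(F\cup F_n)$.

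The hard part will be the uniform boundedness of $\{\overline{V_n}\}$: without the convex-hull containment nothing prevents the $V_n$ from drifting off to infinity, and the Blaschke-type extraction would fail. A secondary subtle point is verifying that $x^*\in U_\infty$ rather than merely $x^*\in\overline{U_\infty}$, which uses $\partial U_\infty\subset F$ together with the positive gap $\dist(x^*,F)\ge\delta$.
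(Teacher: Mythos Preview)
Your proof is correct and follows a somewhat different route from the paper's. Both arguments begin identically: observe $V_n\subset U_\infty$ (the unbounded component of $\C\setminus F$), establish a uniform bound on the $V_n$ (the paper uses $\diam V_n\le\diam(F\cup F_n)$, which is essentially your convex-hull containment), and extract a limit point $x_0=x^*$ of the $x_n$. From there the paper defines the \emph{kernel} $V$ of the sequence $\{V_n\}$ based at $x_0$, shows $\partial V\subset F$, and argues that $V$ must then contain the unbounded component of $\C\setminus F$, contradicting the boundedness of $V$. Your approach is more hands-on: you connect $x^*$ to a distant point $z\in U_\infty$ by a path $\gamma\subset U_\infty$, note that $\gamma$ (together with a short segment from $x_n$ to $x^*$) avoids $F\cup F_n$ for large $n$, and thereby connect $x_n\in V_n$ to $z\notin V_n$ inside $\C\setminus(F\cup F_n)$, a direct contradiction. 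Your argument is slightly more elementary in that it avoids the kernel notion; the paper's version is perhaps cleaner in that it does not require choosing an auxiliary path. One incidental remark: your appeal to Blaschke's selection theorem is unnecessary, since you never use the Hausdorff limit $K$---only the convergence $x_n\to x^*$, which follows from compactness of the bounded set containing the $x_n$.
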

\begin{proof}
We argue by contradiction. By passing to a subsequence, we assume that there exists $\delta>0$ and a sequence $x_n\in V_n$, $n\in \N$, such that $\dist(x_n,F)\geq \delta$ for all $n\in \N$. Note that $\partial V_n\subset F\cup F_n$ and $\dist(x_n,\partial V_n)=\dist(x_n,F\cup F_n)$ for $n\in \N$. Suppose that $\liminf_{n\to\infty}\dist(x_n,\partial V_n)=0$. Since $F_n$ converges to $F$ as $n\to\infty$, we see that $\liminf_{n\to \infty}\dist(x_n,F)=0$, a contradiction. Thus, there exists $\varepsilon>0$ such that $\dist(x_n,\partial V_n)\geq \varepsilon$ for all $n\in \N$.  Since $V_n$ is a bounded component of $\C\setminus (F\cup F_n)$, we have $\diam (V_n)\leq \diam(F\cup F_n)$, which implies that $V_n$ lies in a fixed bounded neighborhood of $F$ for all $n\in \N$. By passing to a subsequence, we assume that $x_n$ converges to a point $x_0\in \C$. Then $B(x_0,\varepsilon/2)\subset V_n$ for all sufficiently large $n\in \N$. 

\begin{figure}
	\centering
	\begin{tikzpicture}
		\draw[line width=2pt, fill=black!10!white] (-0.6,0) circle (1.9cm);
		\draw[fill=white] (0,0) circle (2cm);
		\draw[line width=2pt] (-0.6,0) circle (1.9cm);
		\node at (-2.2,0) {$V_n$};
		\node at (-1.5,-2) {$F_n$};
		\node at (1,-2) {$F$};
	\end{tikzpicture}
	\caption{The assumptions of Lemma \ref{lemma:convergence}. The circles $F_n$ (in bold) converge to the circle $F$. The domain $V_n\subset \C\setminus (F\cup F_n)$ does not intersect the bounded component of $\C\setminus F$.} \label{figure:circles}
\end{figure}
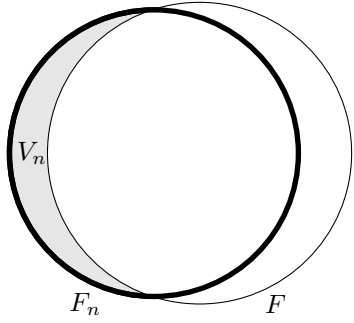

Let $V$ be the kernel of $\{V_n\}_{n\in\N}$ with base at $x_0$. That is, $V$ is the largest domain that contains $x_0$ with the property that each compact subset of $V$ lies in $V_n$ for all sufficiently large $n\in \N$. Note that $V$ is bounded.  Let $y\in \partial V$. Then for each $r>0$ we have $B(y,r)\not\subset V_n$ for infinitely many $n\in \N$; in particular $B(y,r)\cap \partial V_n\neq \emptyset$ for infinitely many $n\in \N$. This implies that $\liminf_{n\to\infty}\dist(y,\partial V_n)= 0$. Thus, $y\in F$.  We conclude that $\partial V\subset F$, so $\C\setminus \partial V\supset \C\setminus F$. The domain $V$ is a component of $\C\setminus \partial V$, so at least one component of $\C\setminus F$ is contained in $V$. If $W$ is a bounded component of $\C\setminus F$, then by assumption $W\cap V_n=\emptyset$ for all $n\in \N$, so $W\cap V=\emptyset$. We conclude that the unbounded component of $\C\setminus F$ is contained in $V$. This contradicts the boundedness of $V$.
\end{proof}

\begin{proof}[Proof of Theorem \ref{theorem:totally_disconnected_ned}]
Without loss of generality, $K$ has a non-degenerate connected component. It suffices to show that this component contains a totally disconnected compact set that is not $S$-removable. Hence, we may assume that $K$ is a non-degenerate continuum. 

Suppose first that $K$ contains a Jordan arc $\widetilde K$ (i.e., a homeomorphic copy of the interval $[0,1]$) and let $\Omega= \C\setminus \widetilde K$, so $\widetilde K=\partial \Omega$. Let $G$ be a countable dense subset of $\partial \Omega$. By Lemma \ref{lemma:countable}, for each $\varepsilon>0$ and each continuum $E\subset \Omega$ there exists an open set $U$ such that $G\subset U\subset N_{\varepsilon}(G)=N_{\varepsilon}(\partial \Omega)$, the set $K'=\partial \Omega\setminus U$ is totally disconnected, and 
$$\mod \Gamma(E,U;\C\setminus K')\leq \mod \Gamma(E,U;\C)<\varepsilon.$$
An application of Lemma \ref{lemma:totally_disconnected_ned_sufficient} completes the proof of Theorem \ref{theorem:totally_disconnected_ned} in that case.

Suppose, next, that the continuum $K$ contains no arcs. Let $\Omega$ be a component of ${\C}\setminus K$. Then  $\partial \Omega$ is a non-degenerate continuum that is a subset of $K$ and contains no arcs. For $n\in \N$, let $\Omega_n\subset \Omega$ be a neighborhood of $\partial \Omega$ in $\Omega$ with $\area(\Omega_n)<4^{-n}$. For $\varepsilon>0$ define 
$$\rho= c\varepsilon^{1/2} \sum_{n=1}^\infty n \chi_{\Omega_n},$$
where $c$ is a positive absolute constant so that  $c \sum_{n=1}^\infty n2^{-n}<1$. We have
$$\|\rho\|_{2}\leq c\varepsilon^{1/2} \sum_{n=1}^\infty n 2^{-n}<\varepsilon^{1/2}.$$
Let $E\subset  \Omega$ be a continuum.  We will find an open set $U\subset N_{\varepsilon}(\partial \Omega)$ such that the set $K'=\partial \Omega\setminus U$ is totally disconnected and 
$$\int_{\gamma}\rho \, ds \geq 1$$
for every curve $\gamma\in \Gamma(E,U;\C\setminus K')$. Assuming this, we have 
$$\mod \Gamma(E,U;\C\setminus K')\leq \|\rho\|_2^2< \varepsilon.$$
This shows that the assumptions of Lemma \ref{lemma:totally_disconnected_ned_sufficient} are satisfied, so Theorem \ref{theorem:totally_disconnected_ned} follows.  

We now focus on constructing the open set $U$. Let $G\subset \partial \Omega$ be a compact set. Consider two distinct parallel lines $L_1,L_2$ and the closed strip $W=W(L_1,L_2)$. Suppose that the compact set $W\cap G$ has infinitely many components intersecting both lines $L_1$ and $L_2$. Consider a sequence $F_n$, $n\in \N$, of pairwise disjoint such components that converges in the Hausdorff sense to a continuum $F\subset G \subset \partial \Omega$ intersecting both $L_1$ and $L_2$. Note that if $F_n\cap F\neq \emptyset$ for some $n\in \N$, then $F\subset F_n$; however, this can hold only for at most one index. We pass to a subsequence so that $F$ is disjoint from $F_n$ for all $n\in \N$.  Note that no bounded component of $W\setminus F$ intersects both $L_1$ and $L_2$, since $F$ is connected; thus, the sets $F_n$, $n\in \N$, lie in unbounded components of $W\setminus F$. The connectedness of $F$ implies that $W\setminus F$ has  precisely two unbounded components, so one of them contains infinitely many sets $F_n$, $n\in \N$. After passing to a subsequence, we assume that for all $n\in \N$ the set $F_n$ lies in the same unbounded component of $W\setminus F$; see Figure \ref{figure:ux}. 

For each $n\in \N$ there exist two line segments $S_{1,n}\subset L_1$ and $S_{2,n}\subset L_2$ that connect $F_n$ and $F$ and are disjoint from $F_n,F$, except at the endpoints. Since $F_n$ converges to $F$, we conclude that $S_{i,n}$ converges to a point of $F\cap L_i$ as $n\to\infty$ for $i=1,2$. Thus, $F_n'=F_n\cup S_{1,n}\cup S_{2,n}$ converges to $F$ in the Hausdorff sense. Consider the domain $V_n$ that is a bounded component of $\C\setminus (F\cup F_n')$ and $S_{1,n}\cup S_{2,n}\subset \partial V_n$. Note that $V_n$ does not intersect any bounded component of $\C\setminus F$. By Lemma \ref{lemma:convergence}, for each $\delta>0$ we have $V_n\subset N_\delta(F)$  for all sufficiently large $n\in \N$.

\begin{figure}
\centering
\begin{tikzpicture}
\draw (-5,2)--(7,2);
\draw (-5,-2)--(7,-2);

\draw[rounded corners=40, dashed] (4.5,-3)--(7.5,-3)--(7.5,3)--(4.5,3)--cycle;
\draw[line width=0.7mm] (4.8,-2)--(4.8,2) node[anchor=north west] {$F_n$};

\draw[fill=black!20] (6,-0.7)--(6,0.7) arc (90:270:0.7);
\node at (5.6,0) {$U_x$};
\draw[line width=0.7mm] (6,-2)--(6,2) node[anchor=north west] {$F$};
\fill (6,0) circle (2pt) node[anchor=west] {$x$};

\draw[rounded corners=30, densely dotted, line width=0.4mm] (0,-4)--(1,-3) -- (5,-4)--(5.4, -0.33);
\node at (1.5,-3) {$\gamma$};
\node at (7,-3.3) {$N_{\delta}(F)$};
\node[anchor=south] at (-4.5,-2) {$L_1$};
\node[anchor=south] at (-4.5,2) {$L_2$};

\draw (-3.5,-2)--(-3.5,2) node[anchor=north west] {$F_1$}; 
\draw (-2,-2)--(-2,2) node[anchor=north west] {$F_2$};
\draw (-1,-2)--(-1,2) node[anchor=north west] {$F_3$};
\draw (0,2)--(0,-2);
\draw (0.5,2)--(0.5,-2);
\draw (1,2)--(1,-2);
\draw (1.5,2)--(1.5,-2);
\draw (2,2)--(2,-2);
\draw (2.5,2)--(2.5,-2);
\draw (3,2)--(3,-2);
\draw (3.25,2)--(3.25,-2);
\draw (3.5,2)--(3.5,-2);
\draw (3.75,2)--(3.75,-2);
\draw (4.0,2)--(4.0,-2);
\draw (4.25,2)--(4.25,-2);
\end{tikzpicture}
\caption{Construction of the open set $U_x$.}\label{figure:ux}
\end{figure}
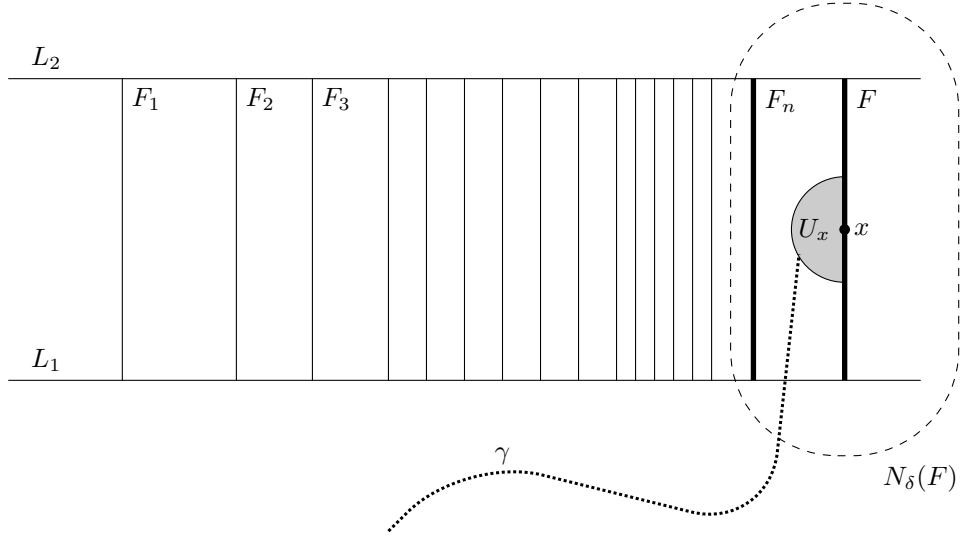

Let $x\in F\setminus (L_1\cup L_2)$, set $d_x=\dist(x,L_1\cup L_2)$, and consider a ball $B_x=B(x,r_x)$ with $r_x<d_x/2$. Let $\delta>0$ such that $\rho\geq 2 d_x^{-1}$ in $N_{\delta}(F)\cap \Omega$ and $E\cap N_{\delta}(F)=\emptyset$.  Let $n\in \N$ be such that $V_n\subset N_{\delta}(F)$. We define $U_x=B_x\cap V_n$; note that $U_x$ is not necessarily contained in $\Omega$. See Figure \ref{figure:ux} for an illustration.  If $\gamma\in \Gamma(E,U_x; \Omega)$, then $\gamma$ starts at $E$, enters the region $V_n$ at a point in $S_{1,n}\cup S_{2,n}\subset \partial V_n$, and has length at least $d_x/2$ in $V_n\subset N_{\delta}(F)$, until it intersects $U_x$. Therefore, 
$$\int_{\gamma}\rho\, ds \geq  1.$$
This shows that $\rho$ is admissible for $\Gamma(E,U_x;\Omega)$.

Now let $U=\bigcup_x U_x$, where the union is over all points $x\in F$,  all continua $F$ arising as limits of sequences $F_n$, $n\in \N$, as above,  all pairs of distinct parallel lines $L_1,L_2$, and all compact sets $G\subset \partial \Omega$. We set $K'=\partial \Omega\setminus U$. 

We show that $\rho$ is admissible for $\Gamma(E,U;\C\setminus K')$. By the above, $\rho$ is admissible for $\Gamma(E,U_x;\Omega)$ for each $x$, so it suffices to show that each path $\gamma\in \Gamma(E,U;\C\setminus K')$ has a subpath in $\Gamma(E,U_x;\Omega)$ for some $x$.  Let $\gamma\colon [a,b]\to \C$ be a curve in $\C\setminus K'$ connecting $E$ to $U$. That is, $\gamma(a)\in E$ and $\gamma(b)\in U_x$ for some $x$.  Let $[a,t)$ be the maximal interval such that $\gamma([a,t))\subset \Omega$. If $t=b$, then $\gamma\in \Gamma(E,U_x;\Omega)$, as desired. Suppose that $t<b$, so $\gamma(t)\in \partial \Omega$. In this case, since $\gamma(t)\notin K'$,  we necessarily have $\gamma(t)\in U$, so $\gamma(t)\in U_y$ for some $y$. Then the subpath $\gamma|_{[a,t]}$ of $\gamma$ lies in $\Gamma(E,U_y;\Omega)$.

Finally, we show that $K'=\partial \Omega\setminus U$ is totally disconnected. Consider a component $G$ of $\partial \Omega\setminus U$. Suppose that there exist distinct parallel lines $L_1,L_2$ such that $W(L_1,L_2)\cap G$ has infinitely many components intersecting both $L_1,L_2$. Then, as above we can find a sequence of distinct components $F_n$, $n\in \N$, of $W(L_1,L_2)\cap G$ that intersect both $L_1,L_2$ and converge to a continuum $F\subset G$. For each $x\in F\setminus (L_1\cup L_2)$ the open set $U_x$ intersects $F_n$ for large $n\in \N$ so $U\cap G\neq \emptyset$, a contradiction. Therefore, for any two parallel lines $L_1,L_2$ the set $W(L_1,L_2)\cap G$ has at most finitely many components intersecting both $L_1,L_2$. By Theorem \ref{theorem:characterization_locally_connected}, $G$ is a locally connected continuum. However, any two distinct points in a locally connected continuum can be connected by a Jordan arc \cite{Willard:topology}*{Theorem 31.2}. Since $\partial \Omega$ does not contain any Jordan arcs (as assumed in the beginning of the proof), we conclude that $G$ is a singleton. 
\end{proof}

\section{S-removable sets}
Let $\omega\colon \C\to [0,\infty)$ be a continuous function. For $x,y\in \C$, we define 
$$d_{\omega}(x,y)=\inf_{\gamma} \int_\gamma \omega\, ds,$$
where the infimum is taken over all rectifiable curves in $\C$ joining $x$ and $y$. 

\begin{theorem}\label{theorem:ned_hausdorff}
Let $E\subset \C$ be a compact set. The following are equivalent.
\begin{enumerate}[\upshape(i)]
	\item $E$ is $S$-removable.\label{nh:i}
	\item There exists a continuous function $\omega\colon \C\to [0,\infty)$ vanishing precisely on the set $E$ such that $\id\colon \C \to (\C, d_{\omega})$ is a quasiconformal homeomorphism and $\dim_{\h}(\id(E))=0$. \label{nh:ii}
	\item There exists a quasiconformal homeomorphism $f$ from $\C$ onto a metric surface $X$ such that $\h^1 (f(E))=0$. \label{nh:iii}
	\item For every metric surface $X$, each quasiconformal embedding $f\colon \C\setminus E \to X$ that extends continuously to $\infty$ with $f(\infty)=\infty$ extends to a quasiconformal homeomorphism from $\C$ onto $X$.\label{nh:iv}
	\item For every metric surface $X$, each homeomorphism $f\colon \C \to X$ that is quasiconformal on $\C\setminus E$ is quasiconformal on $\C$.\label{nh:v}
\end{enumerate}
\end{theorem}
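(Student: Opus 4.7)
The plan is to establish the cycle $(\mathrm{i})\Rightarrow(\mathrm{ii})\Rightarrow(\mathrm{iii})\Rightarrow(\mathrm{iv})\Rightarrow(\mathrm{v})\Rightarrow(\mathrm{i})$. The step $(\mathrm{ii})\Rightarrow(\mathrm{iii})$ should be immediate by taking $X=(\C,d_\omega)$, $f=\id$, and using $\dimh=0\Rightarrow\h^1=0$. For $(\mathrm{iii})\Rightarrow(\mathrm{iv})$, given a QC homeomorphism $f_0\colon\C\to X_0$ with $\h^1(f_0(E))=0$ and a QC embedding $f\colon\C\setminus E\to X$ extending continuously to $\infty$, I would consider $g=f\circ f_0^{-1}\colon X_0\setminus f_0(E)\to X$; since $\h^1(f_0(E))=0$, curves meeting $f_0(E)$ form a family of $2$-modulus zero, making $f_0(E)$ negligible for the geometric definition of quasiconformality, so a standard removability argument extends $g$ to a QC homeomorphism $X_0\to X$, and the composition with $f_0$ provides the required extension of $f$. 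The step $(\mathrm{iv})\Rightarrow(\mathrm{v})$ is then formal: applying $(\mathrm{iv})$ to the QC embedding $f|_{\C\setminus E}$ yields a QC homeomorphism agreeing with $f$ on the dense set $\C\setminus E$, hence equal to $f$ by continuity.

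For $(\mathrm{v})\Rightarrow(\mathrm{i})$ I would argue by contrapositive. If $E$ is not $S$-removable, Theorem \ref{theorem:totally_disconnected_ned} produces a totally disconnected non-$S$-removable compactum $K'\subset E$, and Theorem \ref{theorem:ned_removable} provides a quadrilateral $Q$ with $\mod\Gamma(\zeta_1,\zeta_3;Q\setminus K')<\mod\Gamma(Q)$. This modulus gap can be exploited to build a continuous conformal weight $\omega$ concentrated near $K'$ for which $\id\colon\C\to(\C,d_\omega)$ is a homeomorphism that is QC on $\C\setminus K'\supset\C\setminus E$ but whose global modulus distortion on $\Gamma(Q)$ is arbitrarily large, contradicting $(\mathrm{v})$.

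The principal obstacle is $(\mathrm{i})\Rightarrow(\mathrm{ii})$, the construction of $\omega$. Using the modulus-preservation characterization (Theorem \ref{theorem:ned_removable}), for each $n\in\N$ I would produce a nested open neighborhood $\Omega_n\supset E$ with $\area(\Omega_n)<4^{-n}$ together with suitable control on the $2$-modulus of curves reaching $E$ inside $\Omega_n$. A continuous regularization of $c\sum_n n\chi_{\Omega_n}$, renormalized so as to vanish precisely on $E$ (for instance after multiplication by a factor like $\min(1,\dist(\cdot,E))$), has $L^2$-norm bounded by an absolute constant, whence $(\C,d_\omega)$ has locally finite $\h^2$ and is a metric surface. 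Quasiconformality of $\id$ follows from the conformal invariance of $2$-modulus on $\C\setminus E$ (where $\omega$ is a continuous positive weight) combined with the modulus preservation across $E$ afforded by $S$-removability. The layered $\Omega_n$ structure forces the $d_\omega$-diameter of small Euclidean neighborhoods of $E$ to decay faster than any power, yielding $\dimh(\id(E))=0$ by a direct covering argument.

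The hard part is the simultaneous fulfillment of three competing constraints on $\omega$: continuity with zero set exactly $E$; smallness near $E$ sufficient to collapse the Hausdorff dimension; and enough regularity ($L^2_{\loc}$ and $d_\omega$ separating points) to make $\id$ a quasiconformal homeomorphism onto a metric surface. The Ahlfors--Beurling criterion equating $S$-removability with modulus preservation is the technical input that makes all three conditions realizable at once, and Theorem \ref{theorem:totally_disconnected_ned} is what allows the closure of the cycle at $(\mathrm{v})\Rightarrow(\mathrm{i})$ by supplying the totally disconnected witness needed for the explicit construction of the counterexample.
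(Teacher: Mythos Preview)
Your cycle is the right one, but two steps have real gaps, and in both the missing tool is the Ikonen--Romney characterization (Theorem \ref{theorem:ikonen_romney}). For $(\mathrm{i})\Rightarrow(\mathrm{ii})$, your layered weight $c\sum n\chi_{\Omega_n}$ blows up near $E$, and after multiplication by $\dist(\cdot,E)$ there is no reason the $d_\omega$-diameters decay faster than every power; more seriously, your quasiconformality argument is circular, since the Ahlfors--Beurling criterion compares Euclidean modulus on $\C$ with Euclidean modulus on $\C\setminus E$, not modulus on $\C$ with modulus on $(\C,d_\omega)$. The paper instead takes the explicit weight $\omega(x)=\min\{\delta(x)^{1/\delta(x)},1\}$, $\delta=\dist(\cdot,E)$, which gives $\dim_{\h}=0$ by a direct covering estimate (Lemma \ref{lemma:dimension_zero}). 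Quasiconformality of $\id$ is then obtained \emph{indirectly}: Ikonen--Romney says $(\C,d_\omega)$ is reciprocal, Rajala's theorem supplies a quasiconformal $\phi\colon(\C,d_\omega)\to\C$, and $\phi\circ\id\colon\C\to\C$ is a planar map quasiconformal off $E$, hence quasiconformal everywhere by $S$-removability; thus $\id=\phi^{-1}\circ(\phi\circ\id)$ is quasiconformal. Likewise, for $(\mathrm{v})\Rightarrow(\mathrm{i})$ the paper does not build a weight from the modulus gap but applies Ikonen--Romney in the other direction: the totally disconnected non-$S$-removable $K'\subset E$ from Theorem \ref{theorem:totally_disconnected_ned} yields an $\omega$ with $(\C,d_\omega)$ non-reciprocal, and Lemma \ref{lemma:omega} makes $\id$ quasiconformal on $\C\setminus K'$ but not on $\C$.

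For $(\mathrm{iii})\Rightarrow(\mathrm{iv})$ you compose in the wrong direction. Your map $g=f\circ f_0^{-1}\colon X_0\setminus f_0(E)\to X$ need not extend even continuously to $X_0$, because the complementary set $X\setminus f(\C\setminus E)$ is uncontrolled and may contain nondegenerate continua; having $\h^1(f_0(E))=0$ in the \emph{domain} does not help you land a homeomorphism onto $X$. The paper reverses the arrow: with $E'=X\setminus f(\C\setminus E)$, the map $h=f_0\circ f^{-1}\colon X\setminus E'\to X_0\setminus f_0(E)$ extends to a continuous, proper, cell-like map $X\to X_0$ because the \emph{target} set $f_0(E)$ is totally disconnected. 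Then $\h^1(f_0(E))=0$ and Lemma \ref{lemma:wqc_extend} make $h$ weakly quasiconformal, and Lemma \ref{lemma:wqc_qc} applied to $f_0^{-1}\circ h\colon X\to\C$ upgrades it to a quasiconformal homeomorphism; only a posteriori does one learn that $E'$ was totally disconnected. Finally, in $(\mathrm{iv})\Rightarrow(\mathrm{v})$ you implicitly use that $\C\setminus E$ is dense; the paper first deduces from $(\mathrm{iv})$ that $E$ is totally disconnected (via the Riemann map on a nondegenerate component) before invoking uniqueness of the extension.
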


In \ref{nh:iv} we are considering the one-point compactification of $X$ (recall that $X$ is homeomorphic to $\C$) and we are assuming that $f(\infty)=\infty$. Before giving the proof of Theorem \ref{theorem:ned_hausdorff} we discuss some preliminaries. 

\subsection{Quasiconformal maps}
Let $X,Y$ be metric surfaces. Recall that $X$ and $Y$ are assumed to be homeomorphic to $\C$ and to have locally finite Hausdorff $2$-measure. A homeomorphism $h\colon X\to Y$ is \textit{quasiconformal} if there exists $K\geq 1$ such that 
$$K^{-1} \mod\Gamma\leq \mod h(\Gamma)\leq K\mod\Gamma$$
for every curve family $\Gamma$ in $X$. A continuous map between topological spaces is \textit{cell-like} if the preimage of each point is a continuum that is contractible in each of its open neighborhoods; in the plane, such sets coincide with non-separating continua. A continuous, surjective, proper, and cell-like map $h\colon X\to Y$ is \textit{weakly quasiconformal} if  there exists $K>0$ such that for every curve family $\Gamma$ in $X$ we have
$$\mod \Gamma\leq K \mod h(\Gamma).$$
In this case, we say that $h$ is weakly $K$-quasiconformal. We note that continuous, proper, and cell-like maps from $X$ to $Y$ coincide with uniform limits of homeomorphisms; see \cite{Daverman:decompositions}*{Corollary 25.1A}.

\begin{lemma}[\cite{NtalampekosRomney:nonlength}*{Lemma 7.1}]\label{lemma:wqc_extend}
Let $X,Y$ be metric surfaces and $h\colon X\to Y$ be a continuous, surjective, proper, and cell-like map. Let $A\subset Y$ be a closed set with $\h^1(A)=0$ such that $h$ restricts to a weakly $K$-quasiconformal map from $X\setminus h^{-1}(A)$ onto $Y\setminus A$ for some $K>0$. Then $h$ is weakly $K$-quasiconformal.
\end{lemma}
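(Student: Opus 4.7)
The plan is to split any curve family $\Gamma$ in $X$ as $\Gamma = \Gamma_0 \sqcup \Gamma_1$, where $\Gamma_0$ consists of those curves disjoint from $h^{-1}(A)$ and $\Gamma_1$ of those meeting $h^{-1}(A)$. I would prove $\mod \Gamma_0 \leq K \mod h(\Gamma)$ (with modulus computed in $X$ and $Y$, respectively) and $\mod \Gamma_1 = 0$ in $X$ separately; the full conclusion $\mod \Gamma \leq K \mod h(\Gamma)$ then follows from the subadditivity in \ref{m:outer_measure}, and the constant $K$ is preserved.

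For $\Gamma_0$, I would use that Hausdorff $2$-measure is intrinsic to metric subspaces: admissible densities for $\Gamma_0$ on $X$ and on $X \setminus h^{-1}(A)$ correspond by extension by zero and by restriction, without changing the relevant $L^2$-norm. Consequently the modulus of $\Gamma_0$ in $X$ equals its modulus in $X \setminus h^{-1}(A)$, and likewise for $Y$. Applying the weakly $K$-quasiconformal hypothesis on the restriction, followed by monotonicity \ref{m:outer_measure} in $Y$, yields $\mod \Gamma_0 \leq K \mod h(\Gamma_0) \leq K \mod h(\Gamma)$.

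The crux is showing $\mod \Gamma_1 = 0$ in $X$. I would first establish the analogous fact in $Y$: that the family $\Gamma_Y^A$ of rectifiable curves in $Y$ meeting $A$ has modulus zero in $Y$. Given $\h^1(A) = 0$, this follows from a covering of $A$ by countably many balls of small total radius, standard annular modulus estimates adapted to the metric surface $Y$, and countable subadditivity \ref{m:outer_measure}. Next, any non-constant $\gamma \in \Gamma_1$ falls into one of two cases. In the \emph{crossing} case, $\gamma$ has a nontrivial subarc $\gamma' \subset X \setminus h^{-1}(A)$ whose closure meets $h^{-1}(A)$; after closing up, $h(\gamma')$ lies in $\Gamma_Y^A$, and the subpath law \ref{m:subordinate} together with the weakly-QC restriction bound the modulus of this subfamily by $K \mod \Gamma_Y^A = 0$. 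In the \emph{trapped} case, $\gamma$ lies entirely in $h^{-1}(A)$; by connectedness $h(\gamma) \subset A$ is connected, and the inequality $\diam \leq \h^1_\infty \leq \h^1$ applied to the subset $h(\gamma)$ of $A$ forces $h(\gamma)$ to be a single point, so $\gamma$ is trapped in one fiber $h^{-1}(y_0)$.

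The main obstacle is this trapped case: here $h(\gamma)$ is degenerate, so no direct comparison with $\mod h(\Gamma)$ is available. I expect one must exploit the cell-like structure of the fiber $h^{-1}(y_0)$ together with the weakly-QC restriction applied to exterior curve families approaching $h^{-1}(y_0)$ from $X \setminus h^{-1}(A)$, pulling back the bound $\mod \Gamma_Y^A = 0$ via a limiting argument to conclude that the trapped-curve family also has modulus zero. Completing this step rigorously, within arbitrary metric surfaces where balls need not enjoy the familiar Euclidean modulus estimates, is the most delicate part of the proof.
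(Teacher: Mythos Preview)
The paper does not supply a proof of this lemma; it is cited from \cite{NtalampekosRomney:nonlength}*{Lemma 7.1} and used as a black box, so there is no in-paper argument to compare against.

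On its own merits, your outline is a reasonable start but incomplete, as you acknowledge. The trapped case is a genuine gap in this decomposition: nothing in the hypotheses rules out a fiber $h^{-1}(y_0)$ of positive $\h^2$-measure in $X$, and non-constant rectifiable curves confined to such a fiber can in principle form a family of positive modulus whose image under $h$ consists only of constant curves. These curves have no subpath in $X\setminus h^{-1}(A)$, so neither \ref{m:subordinate} nor the weakly-QC restriction gives any leverage, and the limiting heuristic you sketch (pulling back exterior modulus bounds toward the fiber) does not obviously close the gap. A smaller point: your claim that $\Gamma_Y^A$ has $2$-modulus zero whenever $\h^1(A)=0$ is routine in $\R^2$ but in an arbitrary metric surface the ball-covering argument you allude to already needs some justification.

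A route that sidesteps the trapped case entirely, and is more in line with the tools already in the paper, is to pass through the analytic characterization in Theorem~\ref{theorem:qc_definitions_williams}. Take an upper gradient $g$ of $h|_{X\setminus h^{-1}(A)}$ furnished by \ref{item:qc_equivalence_i_williams} and extend it by zero on $h^{-1}(A)$. Along a trapped curve both sides of the upper gradient inequality vanish, so that case becomes trivial; along a general curve one verifies the inequality much as in the proof of Lemma~\ref{lemma:newton_removable}, using $\h^1(A)=0$ to see that the portion of $h\circ\gamma$ landing in $A$ contributes nothing to $\|h(\gamma(a))-h(\gamma(b))\|$. The Jacobian-type inequality in \ref{item:qc_equivalence_i_williams} is unchanged by extending $g$ by zero, and the equivalence then returns weak $K$-quasiconformality on all of $X$.
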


A weakly $K$-quasiconformal map between planar domains is a $K$-quasiconformal homeomorphism. Indeed, by \cite{NtalampekosRomney:length}*{Theorem 7.4}, such a map is a homeomorphism. Also, note that a quasiconformal homeomorphism between planar domains is a priori required to satisfy only one modulus inequality, as in the definition of a weakly quasiconformal map; see \cite{LehtoVirtanen:quasiconformal}*{Section I.3}. 

Rajala \cite{Rajala:uniformization} introduced the notion of a \textit{reciprocal} metric surface, which we do not define here for the sake of brevity, and proved that a metric surface $X$ is quasiconformally equivalent to an open subset of $\C$ if and only if $X$ is reciprocal.

\begin{lemma}[\cite{MeierNtalampekos:rigidity}*{Lemma 2.13}]\label{lemma:wqc_qc}
Let $X$ be a metric surface and $h\colon X\to \C$ be a weakly quasiconformal map. Then $h$ is a quasiconformal homeomorphism.
\end{lemma}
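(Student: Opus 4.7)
The plan is to derive both injectivity of $h$ (and hence that it is a homeomorphism) and the matching upper modulus inequality from the weakly quasiconformal hypothesis, exploiting the modulus structure of the plane together with Rajala's uniformization theorem.

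For injectivity I argue by contradiction. Since $h$ is cell-like, non-injectivity forces some fiber $K_0 := h^{-1}(y_0)$ to be a non-degenerate continuum in $X$. Fix $R>0$ and set $U := h^{-1}(B(y_0,R))$, which is a precompact open neighborhood of $K_0$ by properness of $h$. Consider the family $\Gamma$ of paths in $\overline U$ joining $K_0$ to $\partial U$. Its image $h(\Gamma)$ consists of curves in $\overline{B(y_0,R)}$ emanating from the single point $y_0$; subordinating each such curve to the family crossing a shrinking annulus $A(y_0;r,R)$ and invoking \ref{m:subordinate} together with the ring modulus formula \ref{m:ring}, we obtain $\mod h(\Gamma)=0$ by letting $r\to 0$. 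The weakly quasiconformal inequality then forces $\mod \Gamma = 0$. I complete Step 1 by producing a contradictory positive lower bound for $\mod \Gamma$ that uses only the non-degeneracy of $K_0$ and the local finiteness of $\h^2$ on $X$.

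Once $h$ is known to be a homeomorphism, the remaining task is to upgrade $\mod \Gamma \le K \mod h(\Gamma)$ to the two-sided bound. The plan is to first show that $X$ itself is reciprocal in Rajala's sense: the quantitative reciprocity of the plane, which gives that the product of the two conjugate moduli of any quadrilateral equals $1$, transfers to $X$ up to a multiplicative constant via the weakly quasiconformal bound, with Lemma~\ref{lemma:wqc_extend} used to absorb $\h^1$-null exceptional sets. Rajala's uniformization theorem then furnishes a quasiconformal homeomorphism $g\colon \C\to X$. The composition $h\circ g\colon \C\to \C$ is weakly quasiconformal between planar domains, hence a genuine quasiconformal homeomorphism by Theorem~7.4 of \cite{NtalampekosRomney:length}. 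Writing $h = (h\circ g)\circ g^{-1}$ exhibits $h$ as a composition of quasiconformal homeomorphisms, completing the proof.

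The main obstacle is the lower modulus estimate for $\mod \Gamma$ in Step 1. In the plane, every non-degenerate continuum has positive capacity relative to any precompact neighborhood, but in an arbitrary metric surface with merely locally finite $\h^2$-measure this can in principle fail (this is essentially the failure of reciprocity in general). A direct pullback of the planar capacity estimate along $h$ would be circular, so an intrinsic geometric argument is needed, most plausibly by constructing an admissible density from the distance to $K_0$ inside a small topological disk around a point of $K_0$, and using $\diam(K_0) > 0$ together with a pigeonhole extraction to force any admissible function to carry a uniformly positive amount of $L^2$-mass.
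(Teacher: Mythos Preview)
The paper does not prove this lemma; it is quoted verbatim from \cite{MeierNtalampekos:rigidity}*{Lemma 2.13}, so there is no in-paper argument to compare against. I will therefore evaluate your plan on its own terms.

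Your Step~1 has a genuine gap, and you identify it yourself: you need $\mod\Gamma>0$ for the family of curves joining the non-degenerate fiber $K_0$ to $\partial U$ inside a general metric surface. This is exactly the content of the \emph{reciprocal lower bound}, which is now known to hold in every metric surface with locally finite $\h^2$ (Rajala--Romney; see also Eriksson-Bique--Poggi-Corradini). Your sketch (``construct an admissible density from the distance to $K_0$ \ldots\ pigeonhole extraction'') is not a proof of that theorem, and the theorem is not a triviality---it is the hard half of reciprocity. You should cite it rather than attempt to rederive it. Once that is cited, the finiteness of $\mod\Gamma^*$ (from local finiteness of $\h^2$ and a positive lower bound on the lengths of curves joining the disjoint compacta $K_0$ and $\partial U$) together with the product lower bound gives $\mod\Gamma>0$, and your contradiction goes through.

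There is, however, a cleaner route that bypasses Step~1 entirely and is closer in spirit to how this lemma is typically proved. The papers \cite{NtalampekosRomney:length}, \cite{NtalampekosRomney:nonlength} (already cited in the present paper) give, for \emph{any} metric surface $X$, a weakly quasiconformal parametrization $u\colon \C\to X$. The composition $h\circ u\colon \C\to \C$ is then continuous, surjective, proper, cell-like (both factors are uniform limits of homeomorphisms, hence so is the composition), and weakly quasiconformal by chaining the two one-sided modulus inequalities. Now invoke the planar fact you already cite, \cite{NtalampekosRomney:length}*{Theorem 7.4}: $h\circ u$ is a quasiconformal homeomorphism of $\C$. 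Injectivity of $h\circ u$ together with surjectivity of $u$ forces $h$ to be injective, hence a homeomorphism; and writing $h=(h\circ u)\circ u^{-1}$, where $u^{-1}$ is quasiconformal since $u$ is now a weakly quasiconformal homeomorphism onto a reciprocal surface, finishes the proof. This avoids both the capacity estimate and the separate verification of reciprocity in your Step~2.

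Two minor remarks on Step~2 as written: the invocation of Lemma~\ref{lemma:wqc_extend} is unmotivated---there are no $\h^1$-null exceptional sets to absorb here---and should be dropped. Apart from that, once $h$ is a homeomorphism your reciprocity argument is correct: for any topological quadrilateral $Q\subset X$, the weak inequality gives $\mod\Gamma(Q)\cdot\mod\Gamma^*(Q)\le K^2\mod\Gamma(h(Q))\cdot\mod\Gamma^*(h(Q))=K^2$, which is precisely the upper reciprocity bound, the only one that is not automatic.
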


\subsection{Conformal weights}
The next statement can be found \cite{NtalampekosRomney:length}*{Proposition 8.1} in the case that $\omega$ is the characteristic function of $\C\setminus E$. The proof can be easily adapted to give the current statement. 

\begin{lemma}\label{lemma:omega}
Let $E\subset \C$ be a totally disconnected compact set and  $\omega\colon \C\to [0,\infty)$ be a continuous function that vanishes precisely on $E$.  Then $(\C,d_{\omega})$ is a metric surface and the identity map $\id\colon \C\to (\C,d_\omega)$ is a homeomorphism that is $1$-quasiconformal on $\C\setminus E$. 
\end{lemma}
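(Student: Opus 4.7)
The plan is to adapt the proof of \cite{NtalampekosRomney:length}*{Proposition 8.1}, which handles the discontinuous weight $\omega=\chi_{\C\setminus E}$. The key properties of that weight used there --- local boundedness above and local lower bounds away from $E$ --- are provided by continuity in the current setting. First I would verify that $d_\omega$ is a metric: symmetry, the triangle inequality, and finiteness of $d_\omega$ are immediate, and only positivity requires care. For distinct $x,y\in\C$: if one of them, say $y$, lies in $\C\setminus E$, a Euclidean ball $B(y,r)$ is disjoint from $E$ and continuity gives a lower bound $\omega\geq c>0$ on $\overline{B(y,r/2)}$, so any curve from $x$ to $y$ has Euclidean length at least $r/2$ inside $\overline{B(y,r/2)}$, yielding $d_\omega(x,y)\geq cr/2$. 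If $x,y\in E$, the argument exploits total disconnectedness: choose a clopen decomposition $E=A\sqcup(E\setminus A)$ with $x\in A$, $y\in E\setminus A$; setting $d=\dist(A,E\setminus A)>0$ and $V=\{z\in\C:\dist(z,A)<d/3\}$, we have $x\in V$, $y\notin\overline V$, and $\partial V\cap E=\emptyset$. Then $\partial V$ is compact and at positive distance from $E$, so a small open tubular neighborhood $T$ of $\partial V$ is disjoint from $E$ and $\omega\geq c>0$ on $\overline T$; any curve from $x$ to $y$ traverses $T$ with Euclidean length bounded below, so $d_\omega(x,y)>0$.

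Second, I would show that $\id\colon\C\to(\C,d_\omega)$ is a homeomorphism and that $(\C,d_\omega)$ has locally finite $\h^2$-measure. Continuity of the identity follows from $d_\omega(x,y)\leq(\max_{[x,y]}\omega)\cdot|x-y|$; this also gives $d_\omega\leq C_K|\cdot|$ on any compact $K$, whence $\h^2_{d_\omega}\leq C_K^2\h^2$ on $K$, which is locally finite. For continuity of the inverse, suppose $d_\omega(x_n,x)\to 0$ while $|x_n-x|\not\to 0$; passing to a subsequence, $x_n\to y$ in $\widehat\C$. If $y\in\C$ with $y\neq x$, the positivity estimate from the previous paragraph (with separating structure depending only on $x$) yields $d_\omega(x_n,x)\geq c>0$ for large $n$, contradicting $d_\omega(x_n,x)\to 0$. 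If $y=\infty$, choose a Euclidean circle of radius $R>|x|+\diam(E)+1$, disjoint from $E$; then $\omega$ is bounded below on a neighborhood of this circle, and any curve from $x$ to $x_n$ with $|x_n|>R$ must cross it, yielding the same contradiction.

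Third, for $1$-quasiconformality of $\id$ on $\C\setminus E$: on this open set $\omega$ is a continuous strictly positive weight, so the classical conformal-weight argument applies. By a standard local approximation (using that $\omega$ is nearly constant on small Euclidean balls), the Hausdorff $2$-measure with respect to $d_\omega$ on $\C\setminus E$ agrees with $\omega^2\,dA$, and the $d_\omega$-length of any locally rectifiable curve in $\C\setminus E$ equals $\int_\gamma\omega\,ds$. For any curve family $\Gamma$ in $\C\setminus E$, the bijection $\rho\leftrightarrow\tilde\rho=\rho/\omega$ between admissible densities with respect to the Euclidean and $d_\omega$ structures preserves the modulus integral $\int\rho^2\,dA=\int\tilde\rho^2\omega^2\,dA$, so the Euclidean modulus of $\Gamma$ equals its modulus in $(\C,d_\omega)$. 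The main technicality lies in identifying $\h^2_{d_\omega}$ with $\omega^2\,dA$ and establishing the length identity on $\C\setminus E$ for merely continuous $\omega$; these are standard for Lipschitz weights and extend to the continuous case via local approximation controlled by the modulus of continuity of $\omega$.
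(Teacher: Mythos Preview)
Your proposal is correct and follows exactly the route the paper indicates: the paper does not give a proof but simply states that the argument of \cite{NtalampekosRomney:length}*{Proposition 8.1} (for the weight $\chi_{\C\setminus E}$) can be easily adapted, and your write-up carries out precisely that adaptation, using continuity of $\omega$ to supply the local upper bounds and the local positive lower bounds away from $E$ that the original proof needs. The separation argument via total disconnectedness for positivity of $d_\omega$, the homeomorphism and local finiteness of $\h^2_{d_\omega}$, and the change-of-density argument for $1$-quasiconformality on $\C\setminus E$ are all standard and correctly sketched.
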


Ikonen and Romney \cite{IkonenRomney:removable} gave the following characterization of $S$-removable sets in connection with surfaces that arise by scaling the Euclidean metric with a degenerate conformal weight.

\begin{theorem}[\cite{IkonenRomney:removable}*{Theorems 1.3 and 1.4}]\label{theorem:ikonen_romney}
Let $E\subset \C$ be a totally disconnected compact set. Then $E$ is $S$-removable if and only if for every continuous function $\omega\colon \C\to [0,\infty)$  that vanishes precisely on $E$ the space $(\C,d_\omega)$ is a reciprocal metric surface.  
\end{theorem}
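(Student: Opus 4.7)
The plan is to treat the two implications separately, with the identity map $\id\colon \C\to X$ (where $X=(\C,d_\omega)$) as the central object. Rajala's theorem reduces reciprocality of $X$ to the existence of a quasiconformal parametrization by a planar domain, and the natural candidate is $\id$ itself. For the forward direction, assume $E$ is $S$-removable. Lemma \ref{lemma:omega} says $X$ is a metric surface and $\id$ is a homeomorphism that is $1$-QC on $\C\setminus E$; the task is to promote this to $1$-QC on all of $\C$. Writing $d\h^2_X=\omega^2\, dA$ off $E$, admissible densities $\rho$ for $\id(\Gamma)$ in $X$ correspond via $\tilde\rho=\rho\omega$ to Euclidean-admissible densities for $\Gamma$ that vanish on $E$, with matching energy, so $\mod\id(\Gamma)$ in $X$ equals the $E$-restricted Euclidean modulus of $\Gamma$. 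This always dominates the ordinary $\mod\Gamma$, and the Ahlfors--Beurling criterion (Theorem \ref{theorem:ned_removable}) expresses $S$-removability as the converse inequality on families of quadrilaterals. I would extend this equality to arbitrary curve families by a quadrilateral-exhaustion argument, conclude that $\id$ is $1$-QC, and apply Rajala's theorem. An alternative and possibly cleaner route is to verify $\h^1_X(E)=0$ (via a cover of $E$ by small Euclidean disks centered on $E$ on which $\omega$ is small, exploiting total disconnectedness) and invoke Lemma \ref{lemma:wqc_extend}, followed by Lemma \ref{lemma:wqc_qc} applied to $\id^{-1}\colon X\to\C$.

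For the backward direction I would argue by contrapositive. Assume $E$ is not $S$-removable. By Theorem \ref{theorem:ned_removable} there is a quadrilateral $Q$ with
\begin{equation*}
\mod\Gamma(Q)>\mod\Gamma(\zeta_1,\zeta_3;Q\setminus E),
\end{equation*}
and by duality the analogous strict inequality holds for the dual family $\Gamma^*(Q)$. Choose $\omega(z)=\min\{\dist(z,E)^\alpha,1\}$ (smoothed if needed) for an appropriate exponent $\alpha>0$, so that $\omega$ is continuous and vanishes precisely on $E$. The same correspondence $\tilde\rho=\rho\omega$ now identifies $\mod\Gamma(Q)$ in $X$ with the $E$-restricted Euclidean modulus of $\Gamma(Q)$, and similarly for $\Gamma^*(Q)$; both $X$-moduli therefore drop strictly below their Euclidean counterparts. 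Applying this estimate to a sequence of quadrilaterals $Q_n$ localized (via rescaling) on finer portions of $E$ where the modulus defect concentrates, one forces the product $\mod\Gamma(Q_n)\cdot\mod\Gamma^*(Q_n)$ in $X$ out of any fixed compact range, violating Rajala's uniform reciprocality bound and proving that $X$ is not reciprocal.

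The main obstacle lies in the forward direction, where the identification of the $X$-modulus with the $E$-restricted Euclidean modulus needs careful justification when $E$ has positive planar Lebesgue measure: the identity $d\h^2_X=\omega^2\, dA$ is only heuristic in general and must be established from Lemma \ref{lemma:omega} together with an analysis of $d_\omega$-rectifiable curves. The technical core is then the promotion of Theorem \ref{theorem:ned_removable}, which is stated only for quadrilaterals, to a statement for arbitrary curve families; this is standard in the theory of NED sets but is the essential input of $S$-removability. The cleaner route via Lemma \ref{lemma:wqc_extend} requires verifying $\h^1_X(E)=0$ for totally disconnected compact $E$ and arbitrary continuous $\omega$ vanishing on $E$, which in turn calls for a dyadic covering argument balancing the number of covering disks (which grows as their Euclidean radii shrink) against the compensating smallness of $\omega$ in thin tubular neighborhoods of $E$.
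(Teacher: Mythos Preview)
The paper does not prove Theorem~\ref{theorem:ikonen_romney}; it is quoted verbatim as Theorems~1.3 and~1.4 of Ikonen--Romney and used as a black box in the proof of Theorem~\ref{theorem:ned_hausdorff}. So there is no in-paper argument to compare against, and what you have written is an attempt to reprove an external result.

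Your forward direction via the modulus identification is on the right track, but the correspondence $\rho\leftrightarrow\rho\omega$ is not as clean as you suggest: curves that are $d_\omega$-rectifiable need not be Euclidean-rectifiable (a path can oscillate wildly near $E$ where $\omega$ is small), so the family of curves over which admissibility is tested genuinely changes between the two metrics. This is exactly where the choice of $\omega$ enters, and it is why your backward-direction sketch overshoots: your identification, if valid, would be independent of $\omega$ and would show that \emph{every} weight produces a non-reciprocal surface when $E$ is not $S$-removable, which is stronger than the theorem asserts and is not what Ikonen--Romney prove. The ``localization via rescaling'' step is also unjustified for a general totally disconnected $E$ with no self-similarity.

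The ``cleaner route'' via $\h^1_X(E)=0$ has a genuine gap. There is no reason this should hold for an arbitrary continuous $\omega$ vanishing on $E$: the paper's own introduction exhibits an $S$-removable set $E\times E$ of Hausdorff dimension~$2$, and for such a set the covering count $N(r)\gtrsim r^{-2+o(1)}$ overwhelms the gain $\diam_{d_\omega}B(x,r)\lesssim r^{1+\alpha}$ coming from a slowly decaying weight $\omega\approx\dist(\cdot,E)^\alpha$ with small $\alpha$. Your proposed dyadic covering argument uses only total disconnectedness, not $S$-removability, and cannot succeed at that level of generality. (Contrast Lemma~\ref{lemma:dimension_zero}, which obtains $\dim_{d_\omega}E=0$ only for one \emph{specific}, extremely fast-decaying weight.)
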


\begin{lemma}\label{lemma:dimension_zero}
Let $E\subset \C$ be a totally disconnected compact set. There exists a continuous function $\omega \colon \C\to [0,\infty)$ that vanishes precisely on $E$ such that the Hausdorff dimension of $E$ in the metric $d_{\omega}$ is zero.
\end{lemma}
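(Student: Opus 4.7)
The plan is to take $\omega$ of the form $\omega(z)=\psi(\dist(z,E))$ for a continuous function $\psi\colon[0,\infty)\to[0,\infty)$ with $\psi(0)=0$ and $\psi>0$ on $(0,\infty)$. For any such $\psi$, the resulting $\omega$ is continuous and vanishes precisely on $E$, so by Lemma \ref{lemma:omega} the space $(\C,d_\omega)$ is a metric surface. The freedom in choosing $\psi$ will be exploited to make it decay so rapidly near $0$ that the $s$-dimensional Hausdorff measure of $E$ in $d_\omega$ vanishes for every $s>0$.

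First I quantify how efficiently $E$ is covered at small scales. For each $n\in\N$, compactness of $E$ yields points $x_{n,1},\dots,x_{n,k_n}\in E$ such that the closed Euclidean balls $\overline B(x_{n,i},2^{-n})$ cover $E$. I then choose positive numbers $\delta_n\downarrow 0$ with $k_n\delta_n^s\to 0$ for every $s>0$. Assuming without loss of generality $k_n\geq 2$, the concrete choice $\delta_n=k_n^{-n}$ works, since for each fixed $s>0$ we have $k_n\delta_n^s=k_n^{1-ns}\to 0$ (and if necessary I replace $\delta_n$ by $\min_{m\leq n}\delta_m$ to enforce monotonicity). Any continuous $\psi$ with $\psi(0)=0$, $\psi(2^{-n})\leq \delta_n$ for all $n$, and $\psi>0$ on $(0,\infty)$ will do; for example, interpolate linearly between the assigned values on $[0,1]$ and take $\psi$ constant on $[1,\infty)$.

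With $\omega$ in hand, the diameter estimate is immediate. For $y,z\in\overline B(x_{n,i},2^{-n})$, the Euclidean segment from $y$ to $z$ stays in the ball and has length at most $2^{1-n}$, while $\omega\leq\psi(2^{-n})\leq\delta_n$ along it, because $\dist(\cdot,E)\leq 2^{-n}$ there. Hence $d_\omega(y,z)\leq 2^{1-n}\delta_n$, and the balls $\overline B(x_{n,i},2^{-n})$ cover $E$ with $d_\omega$-diameters at most $2^{1-n}\delta_n\to 0$. The resulting bound on the Hausdorff $s$-content of $E$ in $d_\omega$ at scale $2^{1-n}\delta_n$ is
$$c(s)\,k_n\bigl(2^{1-n}\delta_n\bigr)^s\leq c(s)\,2^{(1-n)s}\,k_n^{1-ns},$$
which tends to $0$ as $n\to\infty$ for every $s>0$. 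Therefore the $s$-dimensional Hausdorff measure of $E$ in $d_\omega$ vanishes for every $s>0$, so the Hausdorff dimension of $E$ in $d_\omega$ equals $0$.

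The only delicate point is calibrating the decay rate of $\psi$ relative to the entropy $(k_n)$ of $E$; the explicit choice $\delta_n=k_n^{-n}$ beats every fixed exponent $s>0$ simultaneously, so no case analysis on $s$ is needed.
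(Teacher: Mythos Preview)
Your proof is correct and reaches the conclusion by a genuinely different construction than the paper's. Both take $\omega=\psi(\dist(\cdot,E))$, but the paper uses the \emph{universal} choice $\psi(t)=\min\{t^{1/t},1\}$: the super-polynomial decay of $t\mapsto t^{1/t}$ near $0$ yields, for every $p>1$, the estimate $d_\omega(y,x)\lesssim d^{\,p+1}$ on a Euclidean set of diameter $d$ meeting $E$, and one then feeds in a cover witnessing $\h^{(p+1)q}(E)=0$ (available since $\dim_{\h}E\leq 2$). You instead tailor $\psi$ to the covering numbers $k_n$ of $E$, forcing $\psi(2^{-n})\leq k_n^{-n}$ so that the single family of covers $\{\br B(x_{n,i},2^{-n})\}$ kills every exponent $s>0$ at once. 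The paper's route gives one explicit weight that works for every totally disconnected compact $E$; your route is more elementary and bypasses the passage through Euclidean Hausdorff measure, at the price of a weight that depends on $E$ through its entropy.

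One small imprecision: the sentence ``any continuous $\psi$ with $\psi(0)=0$, $\psi(2^{-n})\leq\delta_n$, and $\psi>0$ on $(0,\infty)$ will do'' is not literally true, since the step ``$\dist(\cdot,E)\leq 2^{-n}$ implies $\omega\leq\psi(2^{-n})$'' requires $\psi$ to be non-decreasing on $[0,2^{-n}]$. Your concrete piecewise-linear interpolant of the decreasing sequence $(\delta_n)$ is non-decreasing, so the actual argument is sound; just tighten the quantifier.
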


The proof given below follows from an adaptation of the argument in \cite{IkonenRomney:removable}*{Lemma 5.1}. 

\begin{proof}
Let $\delta(x)=\dist(x,E)$ and define $\omega(x)=\min\{\delta(x)^{1/\delta(x)},1\}$ for $x\notin E$ and $\omega(x)=0$ for $x\in E$. Then $\omega$ is continuous and vanishes precisely on $E$, as required. We will show that the Hausdorff dimension of $E$ in the metric $d_\omega$ is zero. It suffices to show that for each $q>0$ we have $\h^q_{d_\omega}(E)=0$. Let $q>0$ and consider $p>1$ such that $(p+1)q>2$. Let $\varepsilon>0$. Since the Hausdorff dimension of $E$ is at most $2$ in the Euclidean metric and $(p+1)q>2$, we have $\h^{(p+1)q}(E)=0$. Thus, there exists a cover of $E$ by sets $\{A_j\}_{j\in \N}$ with $\diam A_j<1/p$ for each $j\in \N$ and
$$\sum_{j\in \N} (\diam A_j)^{(p+1)q}<\varepsilon.$$
Without loss of generality, $A_j\cap E\neq \emptyset$ for each $j\in \N$. Let $y_j\in A_j\cap E$, $d_j=\diam A_j$, and observe that for each $x\in \br B(y_j,d_j)$ we have 
$$\delta(x)=\dist(x,E)\leq |x-y_j|\leq d_j <\frac{1}{p}<1.$$
In particular, $\omega(x)\leq \delta(x)^{1/\delta(x)} <\delta(x)^p$. For each $x\in A_j$, by integrating over the straight line segment from $y_j$ to $x$, which is contained in $\br B(y_j,d_j)$, we have
$$d_\omega( y_j,x) \leq \int_{[y_j,x]}  \omega\, ds \leq \int_{[y_j,x]} \delta^p\, ds\leq \int_0^{d_j} t^p\, dt= (p+1)^{-1}d_j^{p+1}.$$
Thus, $\diam_{d_\omega}A_j\leq 2(p+1)^{-1}d_j^{p+1}$. We conclude that
$$ \sum_{j\in \N}(\diam_{d_\omega}A_j)^q \leq 2^q(p+1)^{-q} \sum_{j\in \N} d_j^{(p+1)q}<  2^q(p+1)^{-q}\varepsilon.$$
This shows that $\h^q_{d_\omega}(E)=0$ and completes the proof.
\end{proof}

\begin{proof}[Proof of Theorem \ref{theorem:ned_hausdorff}]
Suppose that $E$ is $S$-removable as in \ref{nh:i}, so in particular $E$ is totally disconnected. Consider a weight $\omega$ as in Lemma \ref{lemma:dimension_zero}, so that the Hausdorff dimension of $E$ in the metric $d_{\omega}$ is zero. Since $E$ is $S$-removable, Theorem \ref{theorem:ikonen_romney} implies that $(\C,d_{\omega})$ is reciprocal. By Rajala's theorem \cite{Rajala:uniformization}, there exists a quasiconformal embedding $\phi \colon (\C,d_\omega)\to \C$. By Lemma \ref{lemma:omega}, the identity map $\id\colon \C\to (\C,d_\omega)$ is a homeomorphism that is quasiconformal in $\C\setminus E$. Thus, $\phi\circ \id\colon \C\to \C$ is an embedding that is quasiconformal in $\C\setminus E$. Since $E$ is $S$-removable, every quasiconformal embedding of $\C\setminus E$ into $\C$ is the restriction of a quasiconformal map of $\widehat{\C}$ \cite{Younsi:removablesurvey}*{Proposition 4.7}. Thus, $\phi\circ \id$ is a quasiconformal homeomorphism of the plane and $\id\colon \C\to (\C,d_\omega)$ is quasiconformal. This completes the proof of \ref{nh:ii}. Also \ref{nh:ii} readily implies \ref{nh:iii}. 

We show that \ref{nh:iii} implies \ref{nh:iv}. Suppose that there exists a quasiconformal homeomorphism $f$ from $\C$ onto a metric surface $X$ such that $\h^1(f(E))=0$, as in \ref{nh:iii}. This implies that $f(E)$ is a totally disconnected compact set. Since $f$ is a homeomorphism, $E$ is also totally disconnected. Consider a quasiconformal embedding $g$ from $\C\setminus E$ into a metric surface $Y$ such that $g$ extends continuously to $\infty$ and $g(\infty)=\infty$, as in \ref{nh:iv}. Let $E'$ be the compact set $Y\setminus g(\C\setminus E)$, so from a topological point of view, $E'$ is the complement of a planar domain. In particular, each component of $E'$ is a non-separating continuum. The map $f\circ g^{-1}$ is a quasiconformal homeomorphism from $Y\setminus E'$ onto $X\setminus f(E)$. Since $f(E)$ is totally disconnected, the map $h=f\circ g^{-1}$ extends to a continuous, surjective, proper, and cell-like map from $Y$ onto $X$, which we also denote by $h$; see the discussion in \cite{NtalampekosYounsi:rigidity}*{Section 3.1}. Since $\h^1(f(E))=0$, Lemma \ref{lemma:wqc_extend} implies that $h$ is weakly quasiconformal. Therefore, $f^{-1}\circ h\colon Y\to \C$ is weakly quasiconformal. By Lemma \ref{lemma:wqc_qc}, $f^{-1}\circ h$ is a quasiconformal homeomorphism. Therefore, $g$ is the restriction of the quasiconformal homeomorphism $h^{-1}\circ f\colon \C \to Y$, as claimed in \ref{nh:iv}.

Suppose that \ref{nh:iv} is true. We show first that $E$ is totally disconnected. If not, let $F$ be a non-degenerate component of $E$. By the Riemann mapping theorem one can find a conformal embedding $f\colon \C\setminus F\to \C$ that fixes $\infty$ and does not extend to a homeomorphism of $\C$. This is a contradiction, so $E$ is totally disconnected. Now, consider a homeomorphism $f\colon \C\to X$ that is quasiconformal on $\C\setminus E$ as in \ref{nh:v}. Since $f$ is a homeomorphism, it extends to $\infty$ and $f(\infty)=\infty$. By \ref{nh:iv}, $f|_{\C\setminus E}$ has an extension to a quasiconformal homeomorphism $\widetilde f\colon \C\to X$. Since $E$ is totally disconnected, we must have $\widetilde f=f$. This shows \ref{nh:v}. 

Finally, we show that \ref{nh:v} implies \ref{nh:i}. Suppose that $E$ is not $S$-removable. By Theorem \ref{theorem:totally_disconnected_ned}, there exists a totally disconnected set $F\subset E$ that is not $S$-removable. By Theorem \ref{theorem:ikonen_romney}, there exists a continuous function $\omega \colon \C\to [0,\infty)$ vanishing precisely on $F$ such that the space $(\C,d_{\omega})$ is not reciprocal. Then by Lemma \ref{lemma:omega} the map $\id\colon \C\to (\C,d_{\omega})$ is a homeomorphism that quasiconformal on $\C\setminus F$. This map is not quasiconformal on $\C$, since this would imply that $(\C,d_\omega)$ is reciprocal.
\end{proof}

\section{CH-removable sets}\label{section:ch}

In this section we prove Theorem \ref{theorem:intro_ch}. We discuss some background on Sobolev spaces on metric spaces. See the monograph \cite{HeinonenKoskelaShanmugalingamTyson:Sobolev} for a detailed exposition. Let $\Omega$ be a metric measure space, $V$ be a Banach space, and $1\leq p< \infty$. The Newton--Sobolev space $N^{1,p}(\Omega,V)$ is defined as the space of measurable maps $u\in L^p (\Omega,V)$ with the property that there exists a Borel function $\rho\colon \Omega\to [0,\infty]$ with $\rho\in L^p(\Omega)$ and 
$$\|u(\gamma(a))-u(\gamma(b))\| \leq \int_{\gamma}  \rho \, ds$$
for all curves $\gamma\colon [a,b]\to \Omega$ outside a curve family $\Gamma_0$ with $\md_p\Gamma_0=0$. A function $\rho$ with these properties is called a $p$-weak upper gradient of $u$. If $u\in N^{1,p}(\Omega,V)$, then there exists a minimal $p$-weak upper gradient in $L^p(\Omega)$, which is denoted by $\rho_u$. We note that the minimal $p$-weak upper gradient is local in the sense that if two functions agree in an open set then their minimal weak upper gradients do too \cite{Williams:qc}*{Corollary 3.9}. The space $N^{1,p}_{\loc}(\Omega,V)$ is defined in the obvious manner.

\begin{lemma}\label{lemma:newton_removable}
Let $n\geq 1$, $\Omega\subset \R^n$ be an open set, $V$ be a Banach space, and $1\leq p< \infty$.
Let $f\colon \Omega\to V$ be a topological embedding. If $E$ is a closed subset of $\Omega$ such that $f(E)$ has $\sigma$-finite Hausdorff $1$-measure and $f|_{\Omega\setminus E}\in N^{1,p}(\Omega\setminus E,V)$, then $f\in N^{1,p}(\Omega,V)$ and $\rho_f= \rho_{f|_{\Omega\setminus E}}\chi_{\Omega\setminus E}$. 
\end{lemma}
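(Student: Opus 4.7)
The plan is to verify that the candidate density $\rho := \rho_{f|_{\Omega\setminus E}}\chi_{\Omega\setminus E}$, extended by zero on $E$, is a $p$-weak upper gradient of $f$ on $\Omega$. Once this is established, $f \in N^{1,p}(\Omega,V)$ follows immediately, and the identification $\rho_f = \rho$ is a consequence of the minimality of $\rho_f$ combined with the locality of minimal weak upper gradients from \cite{Williams:qc}*{Corollary 3.9}: on the open set $\Omega\setminus E$, locality yields $\rho_f|_{\Omega\setminus E} = \rho_{f|_{\Omega\setminus E}}$, while on $E$, where $\rho = 0$, minimality forces $\rho_f = 0$.

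I would first dispose of the exceptional curve family $\Gamma_0$ for $f|_{\Omega\setminus E}$, which has $p$-modulus zero in $\Omega\setminus E$ and hence in $\Omega$, since any admissible density on $\Omega\setminus E$ extends by zero to an admissible density on $\Omega$. For an arc-length parametrized rectifiable curve $\gamma\colon [a,b] \to \Omega$ outside $\Gamma_0$, set $S := \gamma^{-1}(E)$, a closed subset of $[a,b]$ with complement $\bigsqcup_i(a_i,b_i)$. Applying the upper gradient inequality for $f|_{\Omega\setminus E}$ to $\gamma|_{[c,d]}$ for $a_i < c < d < b_i$ and passing to the limit $c \to a_i^+$, $d \to b_i^-$ via continuity of $f$ and monotone convergence yields
\[
\|f(\gamma(b_i)) - f(\gamma(a_i))\| \le \int_{a_i}^{b_i} \rho(\gamma(t))\,dt.
\]
Restricting further to the $p$-modulus almost everywhere situation where $\gamma$ is injective and $\int_\gamma \rho\,ds < \infty$, the composition $f\circ\gamma$ is an injective continuous path.

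Granting the key claim that $\h^1((f\circ\gamma)(S)) = 0$ for $p$-a.e.\ $\gamma$, the proof closes cleanly. Since the length of an injective continuous path equals the $1$-measure of its image whenever either is finite, and the images of disjoint sets under an injection are disjoint,
\[
V(f\circ\gamma;[a,b]) = \h^1\bigl((f\circ\gamma)([a,b]\setminus S)\bigr) + \h^1\bigl((f\circ\gamma)(S)\bigr) \le \sum_i \int_{a_i}^{b_i} \rho(\gamma(t))\,dt = \int_\gamma \rho\,ds,
\]
where the inequality uses $\h^1((f\circ\gamma)(S)) = 0$ and the per-component bounds established above. Then $\|f(\gamma(a)) - f(\gamma(b))\| \le V(f\circ\gamma;[a,b]) \le \int_\gamma \rho\,ds$, as desired.

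The main obstacle is therefore the claim that $\h^1((f\circ\gamma)(\gamma^{-1}(E))) = 0$ for $p$-modulus almost every curve $\gamma$ in $\Omega$. Writing $f(E) = \bigcup_k F_k$ with $\h^1(F_k) < \infty$, by countable subadditivity of modulus it suffices to show that for each $k$ the family of curves $\gamma$ with $\h^1((f\circ\gamma)(\gamma^{-1}(f^{-1}(F_k)))) > 0$ has vanishing $p$-modulus. This step transfers the $\sigma$-finite $1$-measure hypothesis on $f(E)\subset V$ (a property of the target) into a modulus statement about curves in the domain $\Omega\subset\R^n$, and I expect it to require a Fuglede-type argument combined with the ACL regularity of $f|_{\Omega\setminus E}$ in a neighborhood of $E$ and the continuity of the extension of $f$ across $E$.
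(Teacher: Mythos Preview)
Your reduction is correct up to the ``key claim'' that $\h^1\bigl((f\circ\gamma)(\gamma^{-1}(E))\bigr)=0$ for $p$-modulus almost every curve $\gamma$, but that claim is where the entire content lies, and you do not prove it; the closing sentence only speculates that a Fuglede-type argument should work. The difficulty is genuine: the hypothesis concerns $\h^1$ in the \emph{target} $V$, while $p$-modulus is computed via integrals in the \emph{domain} $\Omega$, and you have no regularity of $f$ across $E$ that would let you pass between the two along a general curve. (The side assertion that one may restrict to injective $\gamma$ for $p$-a.e.\ curve is also not justified and is not generally true, though this is a secondary issue.)

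The paper's proof sidesteps this by abandoning arbitrary curves and using the ACL characterization of $N^{1,p}$, working only with line segments parallel to the coordinate axes. The decisive and entirely elementary observation is that since $f$ is an embedding, distinct parallel lines $L$ have \emph{disjoint} images $f(L\cap\Omega)$; additivity of $\h^1$ together with the $\sigma$-finiteness of $\h^1(f(E))$ then forces $\h^1(f(E\cap L))=0$ for all but countably many lines in each coordinate direction. Your per-component estimate (which the paper carries out with Hausdorff $1$-content rather than total variation) now yields the upper gradient inequality along almost every such segment, hence absolute continuity on lines. Membership in $N^{1,p}(\Omega,V)$ is then obtained through the scalar functions $f_z(x)=\|f(x)-z\|$ via \cite{Hajlasz:Sobolev}*{Theorem 7.13} and \cite{HeinonenKoskelaShanmugalingamTyson:Sobolev}*{Theorem 7.1.20}, after which the identification of $\rho_f$ proceeds as you outlined. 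The disjointness of images for parallel lines is precisely what converts the $\sigma$-finite target hypothesis into a usable domain statement; general curve families overlap arbitrarily, which is why your direct approach stalls at the key claim.
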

\begin{proof}
Let $\rho=\rho_{f|_{\Omega\setminus E}}\chi_{\Omega\setminus E}$. Fix $j\in \{1,\dots,n\}$ and consider lines parallel to the direction $e_j$. For any two such lines $L_1,L_2$, the images $f(L_1\cap \Omega), f(L_2\cap \Omega)$ are disjoint, since $f$ is an embedding. The additivity of $\h^1$ on measurable subsets of $V$ and the assumption that $f(E)$ has $\sigma$-finite Hausdorff $1$-measure imply that $f(E\cap L)$ can have positive Hausdorff $1$-measure for at most countably many lines $L$ parallel to $e_j$. Therefore, almost every line $L$ parallel to $e_j$ has the property that $\h^1(f(E\cap L))=0$, $\int_{\gamma} \rho\, ds<\infty$ for every line segment $\gamma\colon [a,b]\to L\cap \Omega$, and
\begin{align}\label{lemma:newton_removable_ug}
\| f(\gamma(a))- f(\gamma(b))\|\leq \int_{\gamma} \rho \, ds
\end{align}
for every line segment $\gamma\colon [a,b]\to L\cap (\Omega\setminus E)$. Now, consider a line segment $\gamma\colon [a,b]\to L\cap \Omega$. Denote by $(a_i,b_i)$, $i\in I$, the interiors of the components of $\gamma^{-1}(L\cap (\Omega\setminus E))$ and set $\gamma_i=\gamma|_{(a_i,b_i)}$, $i\in I$. By the subadditivity of Hausdorff $1$-content and \eqref{lemma:newton_removable_ug}, we have
\begin{align*}
\| f(\gamma(a))-f(\gamma(b))\| &\leq \diam f(\gamma([a,b])) = \h^1_\infty(f(\gamma([a,b])))\\
&\leq \sum_{i\in I} \h^1_\infty( f(\gamma_i(a_i,b_i))) +  \h^1_\infty( f(E\cap L))
\\
&\leq \sum_{i\in I}\int_{\gamma_i} \rho\, ds \leq \int_\gamma \rho\, ds.
\end{align*}
This shows that $f$ is absolutely continuous in almost every line. 

For $z\in V$ we define $f_z(x)=\|f(x)-z\|$, $x\in \Omega$. By the above, $f_z$ is absolutely continuous in almost every line and $$\left|\frac{\partial f_z}{\partial x_j}\right| \leq  \rho $$
almost everywhere, for each $j\in \{1,\dots,n\}$. In particular, $|\nabla f_z|\leq \sqrt{n}\rho$. By \cite{Hajlasz:Sobolev}*{Theorem 7.13} (see also \cite{HeinonenKoskelaShanmugalingamTyson:Sobolev}*{Theorem 7.4.5}), we have $f_z\in N^{1,p}(\Omega,\R)$ with $|\nabla f_z|=\rho_{f_z}$. Hence, $\rho_{f_z}\leq \sqrt{n}\rho$ for each $z\in X$. By \cite{HeinonenKoskelaShanmugalingamTyson:Sobolev}*{Theorem 7.1.20}, $f\in N^{1,p}(\Omega,V)$ and $\rho_f\leq \sqrt{n}\rho$. In particular, $\rho_f=0$ on $E$. On the other hand, the locality of the minimal weak upper gradient implies that $\rho_f=\rho$ in the complement of $E$; see \cite{Williams:qc}*{Corollary 3.9}. 
\end{proof}

The notion of Newton--Sobolev space extends to functions whose target is a metric space as follows. Let $X$ be a metric space. By the Kuratowski embedding theorem we may assume that $X$ is isometrically embedded in a Banach space $V$. We define $N^{1,p}(\Omega,X)$ to be the space of functions in $N^{1,p}(\Omega,V)$ with values in $X$. Similarly one defines $N^{1,p}_{\loc}(\Omega,X)$. The next theorem of Williams \cite{Williams:qc}*{Theorem 1.1} gives the equivalence between the geometric and analytic definitions of quasiconformality.

\begin{theorem}[Definitions of quasiconformality]\label{theorem:qc_definitions_williams}
Let $X,Y$ be metric surfaces, $h\colon X\to Y$ be a continuous map, and $K>0$. The following are equivalent.
\begin{enumerate}[label=\normalfont(\roman*)]
    \item \label{item:qc_equivalence_i_williams} $h\in N^{1,2}_{\loc}(X,Y)$ and there exists a $2$-weak upper gradient $\rho$ of $h$ such that for every Borel set $E\subset Y$ we have $$\int_{h^{-1}(E)}\rho^2 \, d\h^2 \leq K \h^2(E).$$
    \item \label{item:qc_equivalence_ii_williams} For every curve family $\Gamma$ in $X$ we have
    $$\mod \Gamma \leq K \mod h(\Gamma).$$
\end{enumerate}
\end{theorem}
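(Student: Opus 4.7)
The plan is to prove the two directions of Williams's theorem separately: the implication (i) $\Rightarrow$ (ii) reduces to upper gradient calculus together with a layer-cake identity, while (ii) $\Rightarrow$ (i) requires the delicate construction of a weak upper gradient out of modulus data.

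For (i) $\Rightarrow$ (ii), I would fix a curve family $\Gamma$ in $X$ and an admissible Borel function $\sigma\colon Y \to [0,\infty]$ for $h(\Gamma)$, and show that $(\sigma\circ h)\rho$ is admissible for $\Gamma$ outside a $2$-modulus null family. Since $\rho$ is a $2$-weak upper gradient of $h$, for $\mod$-a.e.\ arclength-parametrized $\gamma\colon [0,\ell]\to X$ the function $\phi(t) := \length(h\circ\gamma|_{[0,t]})$ is absolutely continuous with $\phi'(t) \leq \rho(\gamma(t))$ a.e., so writing $h\circ\gamma = \eta\circ\phi$ for $\eta$ the arclength reparametrization, the change of variables formula gives
\begin{equation*}
\int_{h\circ\gamma}\sigma\, ds = \int_0^{\phi(\ell)}\sigma(\eta(s))\, ds = \int_0^\ell \sigma(h(\gamma(t)))\phi'(t)\, dt \leq \int_\gamma (\sigma\circ h)\rho\, ds.
\end{equation*}
A layer-cake computation using hypothesis (i) then bounds the $L^2$-norm:
\begin{equation*}
\int_X (\sigma\circ h)^2 \rho^2\, d\h^2 = \int_0^\infty\!\int_{h^{-1}(\{\sigma^2 > t\})}\!\rho^2\, d\h^2\, dt \leq K\int_0^\infty \h^2(\{\sigma^2 > t\})\, dt = K\int_Y \sigma^2\, d\h^2,
\end{equation*}
and infimizing over admissible $\sigma$ yields $\mod\Gamma \leq K\mod h(\Gamma)$.

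For (ii) $\Rightarrow$ (i), using the Kuratowski embedding I would regard $Y$ as isometrically contained in a Banach space $V$ and propose as candidate upper gradient the pointwise dilation
\begin{equation*}
\rho(x) = \limsup_{r\to 0} \frac{\diam h(B(x,r))}{r}.
\end{equation*}
The first task is to establish the ACL property: applied to the family of line segments parallel to a coordinate direction in a chart, hypothesis (ii) together with Fubini forces $h$ to be absolutely continuous on $\h^1$-a.e.\ such segment with $\rho$ controlling the speed, because if this failed on a set of segments of positive linear measure, the resulting curve family would have positive $2$-modulus yet its image would be non-rectifiable and hence of zero modulus. The second task is the integral inequality $\int_{h^{-1}(E)}\rho^2\, d\h^2 \leq K\h^2(E)$ for Borel $E\subset Y$, which I would obtain by testing (ii) against families of curves concentrated in thin annuli around $E$, combined with a Vitali cover of $E$ and the standard ring modulus formula \ref{m:ring}.

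The hard part will be the implication (ii) $\Rightarrow$ (i): extracting from a single modulus inequality both the Sobolev regularity $h \in N^{1,2}_{\loc}(X,Y)$ and the sharp pointwise Jacobian-type bound in the measure-theoretic form required above. In a Euclidean target this is classical via partial derivatives, but with a metric surface target one must work through the Kuratowski embedding and use approximate metric differentiability, since there are no canonical coordinates on $Y$. The delicate step is passing from curve-family statements to pointwise measure-theoretic statements with no a priori regularity of $h$, and this is precisely where Williams's proof invokes a careful Fuglede-type selection argument for admissible densities on small annular subfamilies.
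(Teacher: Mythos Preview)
The paper does not prove this theorem; it is quoted verbatim from Williams \cite{Williams:qc}*{Theorem 1.1} and used as a black box in the proof of Theorem~\ref{theorem:intro_ch}. So there is nothing in the paper to compare your argument against.

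That said, a brief comment on your sketch. Your treatment of (i) $\Rightarrow$ (ii) is the standard one and is essentially correct. For (ii) $\Rightarrow$ (i), however, your plan to establish an ACL property along ``line segments parallel to a coordinate direction in a chart'' does not make sense as written: $X$ is an arbitrary metric surface (a metric space homeomorphic to $\C$ with locally finite $\h^2$), and there is no coordinate structure, no line segments, and no Fubini theorem available on $X$. Williams's actual argument works in abstract metric measure spaces and proceeds differently: he builds the upper gradient by testing the modulus inequality against carefully chosen families and uses a maximal-function / covering argument to obtain the $L^2$-bound, without any appeal to Euclidean charts. Your proposal would go through in the special case $X\subset\R^n$ (which is in fact the only case the paper needs), but not in the generality in which the theorem is stated.
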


\begin{proof}[Proof of Theorem \ref{theorem:intro_ch}]
Let $f\colon \C\to X$ be a quasiconformal homeomorphism such that $f(E)$ has $\sigma$-finite Hausdorff $1$-measure. Let $g\colon \C\to \C$ be a homeomorphism that is quasiconformal in $\C\setminus E$. Our goal is to show that $g$ is quasiconformal in $\C$. Consider the map $h= f\circ g^{-1}\colon \C\to X$, which is a homeomorphism that is quasiconformal in $\C\setminus g(E)$. By Theorem \ref{theorem:qc_definitions_williams}, $h\in N^{1,2}_{\loc}(\C\setminus g(E), X)$, and there exists a $2$-weak upper gradient $\rho$ of $h|_{\C\setminus g(E)}$ such that for every Borel set $G\subset X\setminus f(E)$ we have $$\int_{h^{-1}(G)}\rho^2 \, d\h^2 \leq K \h^2(G).$$
Note that $h\in N^{1,2}(B\setminus g(E),X)$ for each ball $B\subset \C$. By Lemma \ref{lemma:newton_removable} we have $h\in N^{1,2}_{\loc}(\C, X)$ and $\rho_h=0$ on $g(E)$. If we set $\rho=0$ on $g(E)$, then $\rho_h\leq \rho$, so $\rho$ is a $2$-weak upper gradient of $h$ and we have
$$\int_{h^{-1}(G)}\rho^2 \, d\h^2 \leq K \h^2(G).$$
for every Borel set $G\subset X$. By Theorem \ref{theorem:qc_definitions_williams} we conclude that $h$ is a weakly quasiconformal map. Hence, $g^{-1}=f^{-1}\circ h$ is also weakly quasiconformal. Finally, note that weakly quasiconformal maps between planar open sets are quasiconformal; see Lemma \ref{lemma:wqc_qc} and the preceding discussion. We conclude that $g$ is a quasiconformal map, as desired.  
\end{proof}

\bibliography{biblio}

\end{document}